\DeclarePairedDelimiterX\abs[1]{\lvert}{\rvert}{#1}
\def\llbracket{[\hspace{-.10em} [ }
\def\rrbracket{ ] \hspace{-.10em}]}
\renewcommand{\leq}{\leqslant}
\renewcommand{\geq}{\geqslant}
\def\build#1_#2^#3{\mathrel{
\mathop{\kern 0pt#1}\limits_{#2}^{#3}}}
\newcommand{\E}{\mathbb{E}}
\theoremstyle{plain}
\newtheorem{theorem}{Theorem}
\newtheorem{lemma}{Lemma}
\theoremstyle{definition}
\theoremstyle{remark}
\newtheorem{remark}{Remark}
\newcommand{\Z}{{\mathbb{Z}}}
\renewcommand{\P}{\mathbb{P}}
\title{The extinction of the contact process in a one-dimensional random environment with long-range interactions}
\author{Pablo A.\ Gomes}
\thanks{Departamento de Estat\'\i stica, Universidade de S\~ao Paulo, Rua do Mat\~ao 1010 CEP 05508-090 S\~ao Paulo-SP, Brazil. \url{pagomes@usp.br}}
\author{Marcelo R.\ Hilário}
\thanks{Departamento de Matem{\'a}tica, Universidade Federal de Minas Gerais, Av. Ant\^onio Carlos 6627 C.P. 702 CEP 30123-970 Belo Horizonte-MG, Brazil. \url{mhilario@mat.ufmg.br}}
\author{Bernardo N.\ B.\ de Lima}
\thanks{Departamento de Matem{\'a}tica, Universidade Federal de Minas Gerais, Av. Ant\^onio Carlos 6627 C.P. 702 CEP 30123-970 Belo Horizonte-MG, Brazil. \url{bnblima@mat.ufmg.br}}
\author{Thomas Mountford}
\thanks{Institute of Mathematics, \'Ecole Polytechnique Fédérale de Lausanne, Route Cantonale 1015 Lausanne, Switzerland. \url{thomas.mountford@epfl.ch}}
\begin{document}

\begin{abstract}
We study the contact process on the long-range percolation cluster on $\mathbb{Z}$ where each edge $\langle i,j \rangle$ is open with probability $|i-j|^{-s}$ for $s> 2$.
Using a renormalization procedure we apply Peierls-type argument to prove that the contact process dies out if the transmission rate is smaller than a critical threshold.
Our methods involve the control of crossing probabilities for percolation on randomly-stretched lattices as in \cite{Hilario2023}.
\end{abstract}

\maketitle

%%%-----------------------------------------------------------------------------------------------------------------------

{\footnotesize Keywords: contact process; long-range percolation; anisotropic percolation; random environment \\
MSC numbers: 60K35, 82B43}

\section{Introduction}

\subsection{Model and main results}

The aim of this article is to study the phase transition of the contact process on a random graph with long-range connections and unbounded degree distribution.

Let \( s > 0 \).
Starting from the complete graph on \(\mathbb{Z}\), we declare each unoriented edge \(\langle x,y \rangle\) to be open independently with probability \( |x-y|^{-s} \), and closed otherwise.
The subgraph consisting of only the open edges defines a random graph \( G = (\mathbb{Z}, \mathcal{E}) \), which is almost surely connected since it contains every edge of length one.
See Figure \ref{f:long_range}.

Given \( G \), we consider a contact process on \(\mathbb{Z}\) in which infections can spread exclusively along open edges.  
Starting from a given nonempty set \( A \subseteq \mathbb{Z} \) of infected sites, the system evolves as follows:  
\begin{itemize}
    \item At rate \( \lambda \), each infected site \( x \) transmits the infection to each site \( y \) such that \( \langle x, y \rangle \in \mathcal{E} \).  
    \item Infected sites recover at rate \( 1 \).
\end{itemize}

Let \( \zeta^{A,G}_t \) denote the resulting Markov process in \( \{0,1\}^{\mathbb{Z}} \), where $1$ stands for infected and $0$ for healthy.
We may omit the dependence on \( G \) and simply write \( \zeta^A_t \).  
We define the critical infection rate $\lambda_c$ as the threshold value for survival.
That is,
\[
\lambda_c(\mathcal{E}) = \inf\big\{\lambda > 0 \colon {P}_{\lambda}^{\mathcal{E}}\big(\zeta^{\{o\}}_t \neq \varnothing \text{ for every } t \geq 0\big) > 0\big\},
\]
where $P^{\mathcal{E}}_{\lambda}$ stands for the law of the contact process on $\mathcal{E}$.
Standard ergodicity arguments imply that \( \lambda_c (\mathcal{E})\) does not depend on the realization of $\mathcal{E}$, almost surely; hence we write simply $\lambda_c$.
Since \( G \) almost surely contains a copy of \(\mathbb{Z}\), we conclude that \( \lambda_c < \infty \); this was proved originally by~\cite{Harris74} (see Section 1 of Chapter VI of~\cite{Liggett1985} or Chapter 6 of~\cite{Grimmett2010} for a more comprehensive approach).

The interesting question is to determine whether a subcritical phase exists, that is, whether $\lambda_c >0$.
When the underlying random graph has unbounded degree distribution, this is often a challenging question due to the presence of vertices with atypically high degrees that are able to sustain infections for extended periods (see Section \ref{s:related_works} for a discussion on related works on this problem).

For $G = (\mathbb{Z},\mathcal{E})$, when $s\leq 1$, each vertex has infinite degree a.s., therefore the contact processes survives as soon as $\lambda >0$, that is, $\lambda_c=0$.
In \cite{HaoCan_15}, it is shown that if \( s \) is sufficiently large, then \( \lambda_c > 0 \), and it is conjectured that this should hold as soon as \( s > 2 \).  
Our main result resolves this conjecture.

\begin{theorem}
\label{theo1}  
If $s>2$ then the critical infection rate for the contact process on the graph $G=(\mathbb{Z},\mathcal{E})$ satisfies  \( \lambda_c >0 \).
\end{theorem}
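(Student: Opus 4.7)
The strategy is a multi-scale renormalization coupled with a Peierls-type contour argument in space-time, in the spirit of the approach developed in \cite{Hilario2023} for percolation on randomly-stretched lattices. The goal is to show that, for $\lambda$ sufficiently small, the infection started from any finite set is almost surely surrounded in space-time by a ``barrier'' of regions through which propagation fails, forcing extinction with positive probability; a standard $0$-$1$ argument then promotes this to almost sure extinction.

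Fix large scales $L$ and $T$ to be chosen, together with a truncation $r = L^{\alpha}$ for some $\alpha \in (0,1)$. Call a space interval $I_k = [kL, (k+1)L) \cap \mathbb{Z}$ \emph{regular} if no open edge with at least one endpoint in $I_k$ has length larger than $r$, and each vertex in $I_k$ has degree bounded by a constant $D$ in $G$. Since $s > 2$, the probability that a fixed vertex is the endpoint of an open edge of length exceeding $r$ is at most $\sum_{k > r} k^{-s} = O(r^{1-s})$, so the first condition fails on $I_k$ with probability at most $O(L \cdot r^{1-s}) = O(L^{1 + \alpha(1-s)})$. The hypothesis $s > 2$ is exactly what allows a choice $\alpha \in (1/(s-1), 1)$ for which this bound tends to $0$ with $L$; the bound on the short-range degree is controlled by a Chernoff estimate on a sum of Bernoulli variables with summable means.

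Inside a regular interval, the infection may only propagate via edges of length at most $r$, so its dynamics reduces to a bounded-degree, bounded-range contact process on $I_k$. By a standard block argument (coupling with a sub-critical branching process, or comparison with the nearest-neighbor contact process), the probability that the infection started anywhere in $I_k$ survives up to time $T$ and crosses $I_k$ to reach a neighboring interval is exponentially small in $L$ once $\lambda$ is small enough. A space-time block $I_k \times [jT, (j+1)T)$ is then declared \emph{closed} when $I_k$ is regular and no such crossing occurs. The marginal probability of being closed can be made arbitrarily close to $1$, and a Peierls-type argument on the renormalized block lattice is used to conclude that the set of closed blocks almost surely surrounds any finite initial infection. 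Since closed blocks are correlated through edges of length of order $L$, we appeal to the randomly-stretched crossing estimates of \cite{Hilario2023} to guarantee that the non-closed blocks do not themselves percolate across large annuli.

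The main obstacle lies in the simultaneous management of three interacting sources of randomness: the long-range graph $G$, the contact process on $G$, and the Peierls combinatorics on the renormalized block structure. The delicate point is ensuring that closed blocks really form a percolating barrier despite the occasional open edges of length comparable to $L$, which could otherwise short-circuit a would-be barrier and transmit the infection across it. This is what forces the quantitative condition $\alpha(s-1) > 1$, and more importantly requires adapting the stretched-lattice crossing estimates from \cite{Hilario2023} to a space-time setting where the ``stretching'' of the effective lattice is dictated by the random density of long-range edges. I expect this adaptation, rather than the contact-process estimates inside regular intervals or the graph-environment bounds, to be the hardest step.
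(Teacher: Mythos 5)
Your proposal shares the high-level architecture of the paper's proof (renormalize to space-time blocks, Peierls argument, crossing estimates from \cite{Hilario2023}), and you correctly locate where $s>2$ enters (you need $\alpha\in(1/(s-1),1)$ to exist, which requires $s>2$). However, you take a genuinely different — and, as written, incomplete — route in the key step: how the integer line is partitioned into renormalization intervals.

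The paper partitions $\mathbb{Z}$ using random \emph{cut-points}: vertices $v$ such that no open edge $\langle x,y\rangle$ satisfies $x<v<y$, further thinned to \emph{strong} cut-points that are flanked by nearest-neighbor cut-points. This guarantees that the resulting intervals $V_k$ are linked only by the single nearest-neighbor edges at strong cut-points, and that the edge sets $\mathcal E_k$ originating from different $V_k$ are \emph{disjoint}. Consequently a space-time box can be declared good based only on Poisson marks on $\mathcal E_k\times[jT,(j+1)T)$ and $V_k\times[jT,(j+1)T)$; goodness of distinct boxes is genuinely independent, and a good box is a genuine barrier: any infection that enters it gets stuck at the boundary vertex and recovers before it can go anywhere. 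Your scheme of deterministic intervals $I_k=[kL,(k+1)L)$ with a truncation $r=L^\alpha$ loses both of these properties. First, even in a ``regular'' interval, edges of length up to $r$ freely straddle the boundary between $I_k$ and $I_{k\pm 1}$, and your definition of a closed block (``infection started anywhere in $I_k$ does not survive until $T$ and cross'') does not control the situation where the infection is \emph{continually reinjected} into $I_k$ from an active neighbour throughout $[jT,(j+1)T)$ — the very thing that would let it traverse a putative barrier. Making ``closed'' a quenched, local, boundary-condition-free property that is still a barrier is exactly the problem the cut-point construction solves, and you have not proposed a substitute. Second, the closed/open status of $(k,j)$ and $(k\pm1,j)$ are correlated through boundary-crossing edges, so you do not directly get the independent environment-indexed site percolation that the reduction to \cite{Hilario2023} requires.

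Two further technical issues in your sketch: your degree bound $D$ cannot be a fixed constant — the probability that all $L$ vertices of $I_k$ have degree at most $D$ decays in $L$ unless $D$ grows like $\log L$, which changes the quantitative claims downstream. And declaring a block closed when ``no crossing occurs'' mixes the graph randomness and the contact-process randomness in a way that is not obviously compatible with the i.i.d.\ stretched-environment structure $\Lambda=\{\xi_k\}$ you need in order to invoke Lemma 3.1 and Lemma 3.2 of \cite{Hilario2023}. The paper avoids all of this by making the stretching variable $\xi_k=|\mathcal E_{2k}|+1$ a purely graph-measurable quantity with finite $(1+\varepsilon)$-moment (Lemma~\ref{l:expectation_ek}), and then bounding the probability a box is good below by $p^{|\mathcal E_k|}$, so the reduction to \eqref{e:def_perc_stret} is a straight stochastic domination.

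In short: the cut-point decomposition is the missing idea. Without something playing its role, your ``closed'' blocks are not barriers and your block variables are not independent, so the Peierls step does not go through as stated.
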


\begin{center}
\begin{figure}
\begin{tikzpicture}[scale=0.27, every node/.style={circle, fill=gray!70, inner sep=1pt}]

    % Número de sítios
    \def\n{45}
    
    % Desenhar os sítios
    \foreach \x in {1,...,\n} {
        \node (s\x) at (\x, 0) {};
    }
    \foreach \x in {1,...,44} {
        \draw (s\x) -- (s\the\numexpr\x+1);
    }

    % Definir ângulos ajustados
    \def\angleShort{40}  % Para conexões curtas (3 unidades)
    \def\angleMedium{50} % Para conexões médias (5 unidades)
    \def\angleLong{55}   % Para conexões longas (7+ unidades)

    % Desenhar conexões de longo alcance com ângulos ajustados
    % Região 1: antes do cut-point 10 (1 a 9)
    \draw[thick] (s1) to[out=\angleLong, in=180-\angleLong] (s9);
    \draw[thick] (s1) to[out=\angleMedium, in=180-\angleMedium] (s8);
    \draw[thick] (s2) to[out=\angleMedium, in=180-\angleMedium] (s7);
    \draw[thick] (s3) to[out=\angleShort, in=180-\angleShort] (s6);
    \draw[thick] (s4) to[out=\angleMedium, in=180-\angleMedium] (s8);

    % Região 2: entre 10 e 20 (11 a 19)
    \draw[thick] (s11) to[out=\angleMedium, in=180-\angleMedium] (s15);
    \draw[thick] (s11) to[out=\angleShort, in=180-\angleShort] (s13);
    \draw[thick] (s13) to[out=\angleShort, in=180-\angleShort] (s19);
    \draw[thick] (s13) to[out=\angleShort, in=180-\angleShort] (s18);
    \draw[thick] (s13) to[out=\angleShort, in=180-\angleShort] (s16);
    \draw[thick] (s16) to[out=\angleShort, in=180-\angleShort] (s18);

    % Região 3: entre 20 e 35 (21 a 34)
    \draw[thick] (s21) to [out=\angleShort, in=180-\angleShort] (s24) ;
    \draw[thick] (s21) to [out=\angleLong, in=180-\angleLong] (s29) ;
    \draw[thick] (s24) to[out=\angleMedium, in=180-\angleMedium] (s29);
    \draw[thick] (s24) to[out=\angleShort, in=180-\angleShort] (s27);
    \draw[thick] (s25) to[out=\angleShort, in=180-\angleShort] (s28);
    \draw[thick] (s29) to[out=\angleMedium, in=180-\angleMedium] (s34);
    \draw[thick] (s31) to[out=\angleMedium, in=180-\angleMedium] (s33);
    \draw[thick] (s32) to[out=\angleShort, in=180-\angleShort] (s34);

    % Região 4: entre 35 e 45 (36 a 45)
    \draw[thick] (s36) to[out=\angleMedium, in=180-\angleMedium] (s39);
    \draw[thick] (s37) to[out=\angleMedium, in=180-\angleMedium] (s41);
    \draw[thick] (s41) to[out=\angleMedium, in=180-\angleMedium] (s45);
    \draw[thick] (s42) to[out=\angleShort, in=180-\angleShort] (s44);
    \draw[thick] (s42) to[out=\angleShort, in=180-\angleShort] (s45);
    
    % Destacar corretamente os cut-points
    \foreach \x in {10, 20, 35} {
        \node[red] at (\x,0) [circle, fill=red, inner sep=1pt] {};
    }
    \foreach \x in {1, 9, 11, 19, 21, 29, 34, 36, 41} {
        \node at (\x,0) [circle, fill=black, inner sep=1pt] {};
    }
    
\end{tikzpicture}
\caption{A realization of the long-range percolation. 
Assumption $s>2$ guarantees existence of infinitely many cut-points (black and red). Strong cut-points (red) are surrounded by cut-points.}
\label{f:long_range}
\end{figure}
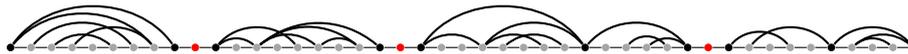
\end{center}

\begin{remark}
During preparation of the final version of this manuscript, we became aware of the recent work \cite{Jahnel25}, in which the authors also prove the same conjecture, among other results.  
Both works were developed independently. Although they share some similarities, the proofs rely on different techniques.  
\end{remark}

While our results establish the existence of a phase transition for \( s > 2 \), the behavior of the model in the intermediate regime \( 1 < s \leq 2 \) remains an open question. 
Determining whether a nontrivial critical infection rate exists in this range would provide a more complete understanding of the model, analogous to the well-established phase diagram for other one-dimensional systems with long-range interactions of order \( |i - j|^{-s} \) such as the Ising model \cite{Dyson69, Frolich84}  and Bernoulli percolation \cite{Newman86, Aizenman86}.

\subsection{Model and definitions}

The contact process introduced by Harris in \cite{Harris74} is an interacting particle system often regarded as a model for the spread of an infection (or information) in a graph.

We will study the contact process on $G$ through its graphical representation.
We now describe the main ideas involved and refer the reader to the classic work by Harris \cite{Harris78} for a detailed and formal construction.

The construction is based on two independent families of Poisson processes $(\mathcal{I}_e \colon e\in \mathcal{E})$ with rate $\lambda$ and $(\mathcal{R}_x \colon x\in \mathbb{Z})$ with rate $1$.
The infection evolves on the space-time structure $\mathbb{Z} \times \mathbb{R}_+$, where the vertical axis represents time. For each vertex $x \in \mathbb{Z}$ and each time $t \in \mathcal{R}_x$, we place a lozenge at the point $(x, t)$, marking a recovery event that halts the infection at $x$. Additionally, for each edge $e = \langle x, y \rangle \in \mathcal{E}$ and each time $t \in \mathcal{I}_e$, we draw a horizontal double arrow between $(x, t)$ and $(y, t)$, indicating a potential transmission of the infection from $x$ to $y$ and from $y$ to $x$.

Consider an initial set of infected vertices $A \subset \mathbb{Z}$ at time 0. 
Infection paths start at points in \( A \times \{0\} \) and can only move upward along vertical lines (representing the passage of time) until they encounter a lozenge, at which point the path terminates (indicating recovery).
Alternatively, a path can jump horizontally from \( (x, t) \) to \( (y, t) \) by crossing a double arrow associated with the edge \( \langle x, y \rangle \), enabling the infection to spread to vertex \( y \) at time \( t \).
An illustration of the graphical representation, including a sample infection path starting from the origin \( o \in \mathbb{Z} \), is provided in Figure~\ref{f:graph_rep}.

\begin{center}
\begin{figure}[htb!]
\begin{tikzpicture}[scale=0.25, every node/.style={circle, fill, inner sep=.75pt}]
    % Número de sítios (apenas a região 29 a 51)
    \def\n{51}
    
    % Fundo cinza
    \fill[gray!10] (28.5,0) rectangle (51.5,22);
    
    % Desenhar os sítios (apenas de 29 a 51)
    \foreach \x in {29,...,\n} {
        \node (s\x) at (\x, 0) {};
    }
    \foreach \x in {29,...,50} {
        \draw (s\x) -- (s\the\numexpr\x+1);
    }

    % Definir ângulos ajustados
    \def\angleShort{40}  % Para conexões curtas (3 unidades)
    \def\angleMedium{50} % Para conexões médias (5 unidades)
    \def\angleLong{35}   % Para conexões longas (7+ unidades)

    % Desenhar conexões de longo alcance (apenas na região 29-51)
    \draw[thick] (s29) to[out=\angleShort, in=190-\angleShort] (s31);
    \draw[thick] (s31) to[out=\angleMedium, in=180-\angleMedium] (s36);
    \draw[thick] (s32) to[out=\angleShort, in=180-\angleShort] (s35);
    \draw[thick] (s34) to[out=\angleMedium, in=180-\angleMedium] (s39);
    \draw[thick] (s41) to[out=\angleLong, in=180-\angleLong] (s49);
    \draw[thick] (s42) to[out=\angleShort, in=180-\angleShort] (s45);
    \draw[thick] (s46) to[out=\angleMedium, in=180-\angleMedium] (s48);
    \draw[thick] (s48) to[out=\angleShort, in=190-\angleShort] (s51);

    % Destacar os cut-points (40 na região)
    \foreach \x in {40} {
        \node[red] at (\x,0) [circle, fill=red, inner sep=1pt] {};
    }
    % Adicionar rótulos abaixo dos cut-points
    \node[below, draw=none, fill=none] at (40,0) {$o$};

    % Estrutura de caminhos a partir de (40,0)
    \draw[thick, red] 
        (40,0) -- (40,8) 
        (40,6) -- (41,6) -- (41,12) 
        (40,2) -- (39,2) 
        (39,2) -- (39,5) 
        (39,2) -- (39,3) 
        (39,3) -- (34,3) -- (34,11) 
        (41,9) -- (49,9) -- (49,15);

    % Existing double arrows
    \draw[thick, <->, red] (40,6) -- (41,6);
    \draw[thick, <->, red] (40,2) -- (39,2);
    \draw[thick, <->, red] (39,3) -- (34,3);
    \draw[thick, <->, red] (41,9) -- (49,9);

    % New double arrows
    \draw[thin, <->] (29,1) -- (31,1);    % Paralelo a 29-31
    \draw[thin, <->] (49,1) -- (50,1);    % Comprimento 1
    \draw[thin, <->] (31,18.5) -- (36,18.5); % Paralelo a 31-36
    \draw[thin, <->] (46,7.5) -- (48,7.5); % Paralelo a 46-48
    \draw[thin, <->] (30,9.5) -- (31,9.5); % Comprimento 1
    \draw[thin, <->] (32,13) -- (35,13);   % Paralelo a 32-35
    \draw[thin, <->] (35,21) -- (36,21);   % Comprimento 1
    \draw[thin, <->] (42,13) -- (45,13);   % Paralelo a 42-45
    \draw[thin, <->] (42,4) -- (45,4);   % Paralelo a 42-45
    \draw[thin, <->] (48,17) -- (51,17);   % Paralelo a 42-45
    \draw[thin, <->] (45,15) -- (46,15);   % Comprimento 1
    \draw[thin, <->] (42,12) -- (43,12);   % Comprimento 1
    \draw[thin, <->] (35,16) -- (36,16);   % Comprimento 1
    \draw[thin, <->] (41,18) -- (49,18);

    \foreach \h in {29,...,51} {
        \draw[dotted, thin] (\h,0)--(\h,22);
    }

    % Adicionar marcas de cura para h de 31 a 49 com t(h) entre 11 e 22
    \foreach \h/\th in {29/12, 31/15, 34/11, 35/17, 37/21, 
                        40/15, 41/12, 42/20, 46/19, 49/15} {
        \draw[blue!50, fill] (\h,\th-0.5) -- (\h+0.5,\th) -- (\h,\th+0.5) -- (\h-0.5,\th) -- cycle;
    }
    % Adicionar marcas de cura para alguns h's de 31 a 49 com t(h) entre 1 e 9
    \foreach \h/\th in {33/3, 35/7, 39/5, 40/8,
                        43/7, 47/4, 48/5, 51/10} {
        \draw[blue!50, fill] (\h,\th-0.5) -- (\h+0.5,\th) -- (\h,\th+0.5) -- (\h-0.5,\th) -- cycle;
        \draw[dotted, thin] (\h,0)--(\h,\th);
    }

    % Ajustar os limites da figura
    \clip (29,0) rectangle (51,22);
\end{tikzpicture}
\caption{Graphical representation of the contact process on a random one-dimensional graph with long-range connections. Lozenges (blue) represent recovery marks, while horizontal double arrows (gray and red) indicate potential transmission marks. The red path illustrates the spread of infection starting from the vertex $o$.}
\label{f:graph_rep}
\end{figure}
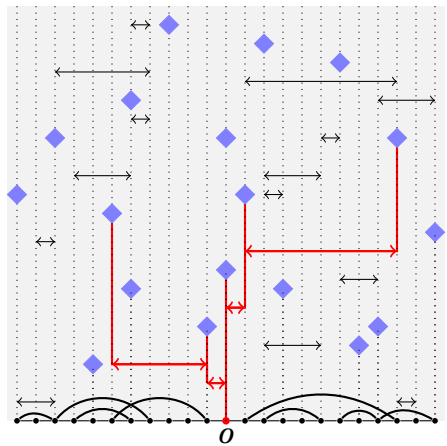
\end{center}

\subsection{Related works}
\label{s:related_works}

The contact process on graphs like the $d$-dimensional Euclidean lattice and the $d$-ary regular tree is a well-understood model (see \cite{Liggett1985, Liggett1999} for comprehensive accounts).

Over the past two decades, research has increasingly focused on the contact process on random graphs.
To the best of our knowledge, sufficient conditions for the existence of a non-trivial subcritical phase for the contact process on random graphs with unbounded degree distributions were first established by Ménard and Singh \cite{Menard_16}. Their conditions apply to graphs such as Delaunay triangulations and random geometric graphs. 
In \cite{HaoCan_15}, Can verified these conditions for the long-range percolation graph when $s>102$. 
More recently, Jahnel et al. \cite{Jahnel25} established the existence of a subcritical phase for all $s>2$, independently of our work.
Their proof relies on a coupling with a random walk on a random environment while ours, as is classic in percolation, is based on a Peierls argument.

In \cite{Jahnel25}, the authors also study the phase transition for the contact process on one-dimen\-sional random geometric graphs with i.i.d.\ random radii.
They show that a phase transition takes place if, and only if, the radius distribution has finite mean. The contact process on the dynamic version of this random graph, where the radii update independently at rate one was studied in \cite{GL2022}.

Many other authors have considered different kinds of random graphs, mainly Galton-Watson trees and the configuration model.
We now list some of the results in the field and refer the reader to \cite{Valesin2024} for a gentle and comprehensive account on the topic.

In \cite{Pemantle01}, Pemantle and Stacey studied the contact process on Galton-Watson trees with bounded-degree offspring distributions. 
They identified examples where the critical points for local and global survival differ, leading to a double phase transition, as observed in regular $d$-ary trees. 
However, the existence of a subcritical phase for Galton-Watson trees with unbounded-degree offspring remained an open question.
Huang and Durrett \cite{Huang2020} showed that if the offspring distribution is subexponential, no subcritical phase exists.
In \cite{Bhamidi2021}, Bhamidi et al.\ established that a subcritical phase exists if and only if the offspring distribution has exponential tails.

The contact process on random graphs generated by the configuration model on $n$ vertices with a power-law degree distribution (with exponent greater than 3) was studied by Chatterjee and Durrett \cite{ChatterjeeDurrett2009}. 
As these graphs are finite, the contact process eventually dies out. 
However, they proved that no phase transition occurs, meaning that for any positive infection parameter $\lambda$, the process survives for an exceptionally long time. 
Specifically, the extinction time grows at least as a stretched exponential function of the number of vertices. 
Mountford et al. \cite{Mountford2016} improved this result, showing that the survival time is exponential in the number of vertices. 
Additionally, Mountford et al.\ \cite{MountfordValesinYao2013} established sharp bounds for the density of infected sites.
For the particular case when the degrees equal some constant $d$, also called random $d$-regular graph, Mourrat and Valesin \cite{Mourrat2016} and Lalley and Su \cite{Lalley2017} demonstrated the existence of a phase transition. 
The latter work also established a cutoff for the density of infected sites.
Bhamidi et al.\ \cite{Bhamidi2021} showed that a phase transition occurs if and only if the degree distribution has an exponential moment.

\subsection{Summary of the paper}

The proof of Theorem \ref{theo1} is based on a Peierls-type argument for a percolation model obtained after we perform a one-step renormalization, which we summarize next.
\medskip

\noindent {\bf Divide the graph into intervals bounded by cut-points: }
We start by partitioning the underlying random graph $G=(\mathbb{Z}, \mathcal{E})$ into infinitely many intervals $V_k$ ($k\in\mathbb{Z}$) whose endpoints are cut-points.
These are vertices $v\in\mathbb{Z}$ for which no open edge $\langle x,y \rangle$ exists with $x < v < y$.
This implies that there are no edges linking sites from intervals $V_k$ and $V_j$ with $k \neq j$.
The existence of a doubly infinite sequence of cut-points is ensured by the assumption that $s > 2$.
This is the done in Section \ref{s:cut_points}.
\medskip

\noindent {\bf Controlling the number of vertices and edges within  intervals:} 
We then show that the number of vertices within the intervals $V_k$ behaves as a sequence of independent random variables with finite $(1+\varepsilon)$-moment, for some sufficiently small $\varepsilon>0$ (Lemma \ref{LemmaV}).
The same holds for the number of edges connecting pairs of vertices in $V_k$ (Lemma \ref{l:expectation_ek}).
\medskip

\noindent {\bf Renormalizing to a box lattice:}
Next, we partition the time axis into intervals of length $T > 0$.
This leads to a tiling of the space-time domain $\mathbb{Z} \times \mathbb{R}_+$ into boxes \(V_k \times [jT, (j+1)T)\) that project horizontally onto $V_k$ and vertically onto strips of length $T$.

A box is said to be good if:
\begin{itemize}
\item every vertex $x \in V_k$ has a recovery mark in $[jT, (j+1)T)$;
\item for every pair $x, y \in V_k$, there are no transmission attempts along the edge $\langle x, y \rangle$ in $[jT, (j+1)T)$.
\end{itemize}

The construction is carefully made to ensure that the status of each box is independent.
Moreover, the probability that a box \(V_k \times [jT, (j+1)T)\) is good is bounded below by $p^{\phi_k}$, where $(\phi_k)_{k\in\mathbb{Z}}$ are i.i.d.\ random variables with finite $(1+\varepsilon)$-moment, and $p = p(\lambda, T)$ can be made arbitrarily close to $1$ by choosing $T$ large enough and $\lambda$ small.

This leads to a site percolation model on the renormalized lattice $\mathbb{Z} \times \mathbb{Z}_+$, where the probability that a site is open depends on the column it belongs to.

This is the content of Section \ref{s:renormalization}.
The reader is invited to consult Figure \ref{f:renormalization_boxes} for an illustration of a good box.
\medskip

\noindent {\bf Peierls argument:} A Peierls-type argument shows that the contact process dies out if there exists a semicircuit of good boxes surrounding the origin.
In fact, since the left and right boundaries of each box project onto cut-points, if the infection ever enters a good box \(V_k \times [jT, (j+1)T)\), it could only propagate to neighboring boxes $V_{k-1} \times [jT, (j+1)T)$, \(V_{k+1} \times [jT, (j+1)T)\), or $V_k \times [(j+1)T, (j+2)T)$ passing through their common boundaries.
However that propagation if forbidden if the neighboring boxes are also good (see Figure \ref{f:renormalization_boxes}).
Therefore, an infection starting at the origin  cannot reach the exterior of the semicircuit.

Our task now reduces to showing that, for $p$ sufficiently large, a semicircuit of open sites surrounding the origin exists almost surely for the above site percolation.
If we were dealing with a percolation model with weak dependencies, standard techniques could be used to establish the existence of such circuits.
However, since the parameters vary across columns, correlations along the vertical axis do not decay with distance (under the annealed law), complicating the implementation of a Peierls-type argument.
\medskip

\noindent {\bf Comparison with bond percolation on a randomly stretched lattice:} 
It turns out that the probability of finding semicircuits in the above site percolation model can be compared with that of a bond percolation model studied in \cite{Hilario2023}.

It is defined as follows: Let $\Lambda = \{\xi_k \colon k \in \mathbb{Z}\}$ be a sequence of i.i.d.\ positive random variables. 
Conditional on $\Lambda$, each edge $e$ of the lattice $\mathbb{Z}\times \mathbb{Z}_+$ is declared open independently with probability
\begin{equation}
\label{e:def_perc_stret}
\rho_e = \left\{
\begin{array}{ll}
\rho, & \text{if } e = \{(i, j), (i, j+1)\} \text{ for some } i, j, \\
\rho^{\xi_{i+1}}, & \text{if } e = \{(i, j), (i+1, j)\} \text{ for some } i, j.
\end{array}
\right.
\end{equation}
We will write $\mathbb{P}^{\Lambda}_{\rho}$ for the corresponding probability measure induced in $\{0,1\}^{\mathcal{E}(\mathbb{Z}^d)}$

When the sequence $(\xi_n)_n$ consists of independent geometric random variables, the model is reminiscent of the one introduced in  \cite{JMP} and studied further in \cite{Hoffman_2005, Hilario2023+}.
In our setting, however, the marginal distribution of $\xi_n$ is no longer geometric, requiring a different analytical approach.
To handle this, we rely on the results developed in \cite{Hilario2023}.
There the authors derive estimates for the probability of crossing rectangles and apply them to show that for $p$ large enough the connected component containing the origin is infinite with positive probability. 
Here we use crossings of rectangles in order to build a semicircuit around the origin with high probability.
This is done in Sections \ref{sec:crossing} and \ref{sec:stretched}.

\section*{Acknowledgements} 
The authors thank Daniel Valesin for valuable discussions, and two referees for comments, suggestions and corrections.
P.G.\ acknowledges the hospitality of EPFL, where  discussions with Prof.~Thomas Mountford were initially held. The research of P.G. is partially supported by Fundação de Amparo à Pesquisa do Estado de São Paulo grants	21/10170-7 and 23/13453-5.
The research of M.H.\ is partially supported by Fundação de Amparo à Pesquisa do Estado de Minas Gerais grant APQ-01214-21 (Universal), Conselho Nacional de Desenvolvimento Científico e Tecnológico grant 312566/2023-9 (Produtividade em Pesquisa) and grant 406001/2021-9 (Universal).
B.N.B.L. is partially supported by Conselho Nacional de Desenvolvimento Científico e Tecnológico grant 315861/2023-1.

\section{Control of the environment, cut-points}
\label{s:cut_points}

Our running assumption throughout the article will be that $s>2$.
It implies that the expected number of edges that connect sites on opposite sides of the origin is finite almost surely.
In fact,
\begin{equation}
\label{e:over_origin}
\sum_{i \in \Z_-} \sum_{j \in \Z_+} \P\big( \langle i, j\rangle \text{ is open} \big)= \sum_{i \in \Z_-} \sum_{j \in \Z_+} \frac{1}{|i-j|^s} < \infty.
\end{equation}
We define
\begin{eqnarray} 
x_{+} &=& \sup\big\{ j \in \Z_+ \colon \textrm{$\langle i, j \rangle$ is open for some $i \in \Z_-$}\big\}, \\
x_{-} &=& \inf\big\{ i \in \Z_- \colon \text{ $\langle i, j \rangle$ is open for some $j \in \Z_+$}\big\},
\end{eqnarray}
with the convention that $\sup(\varnothing) = \inf(\varnothing) =0$.
It follows from \eqref{e:over_origin} that both $x_{+}$ and $x_{-}$ are finite, almost surely.
Notice that, in order to determine $x_{-}$ and $x_{+}$ one only needs to reveal the states of edges connecting sites on opposite sides of the origin.

A cut-point is a vertex $x \in \Z$ such that, for every pair of positive integers $i , j \in \Z_+$, the edge $\langle x - i , x + j\rangle$ is closed.
We have
\begin{equation}
\begin{split}
\P (\text{$o$ is a cut-point}) & = \prod_{i \geq 1} \prod_{j \geq 1} \P\big(\langle - i ,  j\rangle \textrm{ is closed}\big) =  \prod_{i \geq 1} \prod_{j \geq 1} \left(1 - \frac{1}{|j+i|^s} \right) \\
& \geq  \tfrac{1}{2}\exp\left(- \sum_{i \geq 1} \sum_{j \geq 1}\frac{1}{|j+i|^s} \right) > 0,
\end{split}
\end{equation}
where we used the assumption $s>2$ in the last inequality.
By translation invariance there exist infinitely many cut-points on both sides of the origin, almost surely.
We look for those lying to the right of $x_{+}$ exploring the state of edges in the positive half-line as follows.
Define
\begin{equation*} 
x_{1} = \min\{ x > x_+ \colon \textrm{ $\langle i,j  \rangle$  is closed for all $0 \leq i < x <  j$}\}
\end{equation*} 
and inductively for $n\geq 1$,
\begin{equation*} 
x_{n+1} = \min\big\{ x > x_n \colon \textrm{ $\langle i, j \rangle$ is closed for all $x_n \leq i < x <  j$}\big\},
\end{equation*}
Analogously, we can perform the exploration to the left of $x_{-}$ defining
\begin{equation*} 
x_{-1} = \max\big\{ x < x_- \colon \textrm{ $\langle i,j  \rangle$ is closed for all  $i < x <  j \leq 0$}\big\}
\end{equation*} 
and inductively for $n\geq 1$,
\begin{equation*} 
x_{-(n+1)} = \max\big\{ x < x_{-n} \colon \text{$\langle i, j \rangle$ is closed for all  $i < x <  j \leq x_{-n}$}\big \},
\end{equation*}
This yields a bi-infinite sequence $(x_k)_{k \in \mathbb{Z} \setminus \{0\}}$ of cut-points.

We now define, for each $n \geq 1$,
\begin{equation}\label{xi}
\theta_n = x_{n+1} - x_{n} \quad \textrm{ and } \quad \theta_{-n} = x_{-n} - x_{-(n+1)}.
\end{equation} 
Given the above construction of the sequence $\{x_k, ~ k \in \mathbb{Z} \setminus \{0\}\}$, one can readily check that $\{\theta_k \colon k\in \mathbb{Z} \setminus \{0\}\}$ forms an i.i.d.\ sequence.
We denote by $\theta$ a random variable with the same distribution as any of the variables in that sequence.
The following result shows that $\theta$ has some moment of order larger than one.
\begin{lemma}
\label{Lemmaxi}
For every $0 \leq \delta < s-2$,
\begin{equation*}
\E\left[\theta^{1+\delta}\right] < \infty.
\end{equation*}
\end{lemma}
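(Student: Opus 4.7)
The plan is to reduce the moment bound to a polynomial tail estimate for $\theta$. Specifically, I aim to show that $\P(\theta > L) \leq C\, L^{-(s-1)}$, possibly with a mild logarithmic correction. Once this is in hand, the identity $\E[\theta^{1+\delta}] = (1+\delta)\int_0^\infty L^\delta\, \P(\theta > L)\, dL$ yields finiteness for every $\delta < s - 2$ by direct integration, since $\int_1^\infty L^{\delta - (s-1)}\, dL < \infty$ exactly in that range.

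To set things up, I would first observe that, by the construction of the $x_k$'s starting from a cut-point at $0$, the event $\{\theta > L\}$ is equivalent to saying that for each integer $x \in \{1, \ldots, L\}$ there is an open edge $\langle i, j\rangle$ with $0 \leq i < x < j$ that "straddles" $x$ (otherwise such an $x$ would itself be a cut-point). To make this explicit I would build a greedy cover: set $J_0 = 0$ and recursively
\[
J_{k+1} = \max\{j > J_k : \exists\, i \in [0, J_k) \text{ with } \langle i, j\rangle \text{ open}\},
\]
stopping when this maximum is empty. A straightforward check shows $\theta > L$ iff the process $(J_k)$ reaches some value strictly larger than $L$ in finitely many steps.

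A union bound over edges then controls each jump. The first one satisfies $\P(J_1 > d) \leq \sum_{j > d} j^{-s} \leq C_1\, d^{-(s-1)}$, which already matches the target rate; and for $k \geq 1$,
\[
\P\bigl(J_{k+1} - J_k > d \,\big|\, J_k = x\bigr) \,\leq\, \sum_{i=0}^{x-1}\sum_{j > x+d}(j-i)^{-s} \,\leq\, C_2\, \min\bigl(x\, d^{-(s-1)},\ d^{-(s-2)}\bigr).
\]
The small-$x$ variant $x\, d^{-(s-1)}$ is the crucial improvement: it captures the fact that the process cannot easily make a long jump while still close to the origin. I would then decompose $\P(\theta > L)$ according to the step $K$ at which the process first exceeds $L$ and the prior position $J_{K-1} = x$, bound the final jump via the estimate above with $d = L - x + 1$, and sum: the regime $x \leq L/2$ benefits from the factor $x$, while the regime $x > L/2$ uses the $d^{-(s-2)}$ form to cancel the loss. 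Combined, this yields the target $\P(\theta > L) \leq C\, L^{-(s-1)}$.

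The hardest step will be this final trajectory sum. Because the jumps $\Delta_k$ are heavy-tailed with index $s-2$ (indeed infinite mean when $s \in (2,3)$), a naïve union bound over trajectory lengths $K$ produces additional powers of $L$ and gives only $\P(\theta > L) \lesssim L^{3-s}$, which is too weak. To do better, one must exploit the small-$x$ form of the jump bound for the early part of the trajectory and use that the total expected number of distinct positions visited by the greedy process is at most $\E[\theta] = 1/\P(0 \text{ is a cut-point}) < \infty$ (finite thanks to the positive probability of $0$ being a cut-point, already established in the excerpt). This controls the number of "contributing" trajectories and recovers the sharp rate $L^{-(s-1)}$.
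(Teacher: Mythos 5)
Your overall strategy --- reduce to a polynomial tail estimate for $\theta$ and then integrate --- is a sound plan, and your greedy exploration together with the jump estimates are essentially correct; they are close in spirit to the backward exploration $(z_\ell)$ used in the paper's own proof, just run forward instead of backward. However, the step you yourself flag as hardest, the trajectory sum, is not resolved, and the ingredients you name do not close it. The early regime $x \leq L/2$ is indeed fine: there $(L-x)^{-(s-1)} \leq C L^{-(s-1)}$, and either $\E[\theta]<\infty$ or the telescoping bound $\sum_K \Delta_{K-1}\mathbf{1}_{\{J_{K-1}\leq L/2\}} \leq \min(\theta,L/2)$ controls the prefactor, giving $C\,\E[\theta]\,L^{-(s-1)}$. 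The late regime $x\in(L/2,L]$ is where things break. One is left needing to bound a sum of the form $\sum_{x>L/2}\P(\text{greedy visits }x)\,(L-x)^{-(s-2)}$. Bounding $\P(\text{greedy visits }x)\leq 1$ gives $L^{3-s}$, which is vacuous for $2<s<3$. Using $\E[\theta]<\infty$ to bound $\sum_x\P(\text{greedy visits }x)$ does not help either, since the weight $(L-x)^{-(s-2)}$ concentrates near $x=L$, so one needs decay of $\P(\text{greedy visits }x)$ as $x\to L$, which is exactly the estimate being sought. Plugging in $\P(\text{greedy visits }x)\leq\P(\theta>x)\leq C x^{-(s-1)}$ is circular, and even so the self-consistent rate for $s\in(2,3)$ comes out as $L^{-(2s-4)}$, strictly worse than your target $L^{-(s-1)}$. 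So the argument does not yet prove the claimed tail bound, and hence does not prove the lemma.

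The paper circumvents precisely this difficulty by first modifying the cut-point sequence to $\theta^L$, defined by ignoring edges of length at most $L$. This makes the per-step probability $1-a(L)$ of a nontrivial backward jump small for $L$ large, which in turn permits truncating the trajectory sum at $m\approx\beta\log n$ steps: the small-$m$ terms are handled by the polynomial estimate of Lemma~\ref{l:hit_z}, and the large-$m$ tail by $(1-a(L))^m$. Because this only works for $L$ large, the bound for the genuine $\theta$ is then recovered by a bootstrapping step: $\theta$ is dominated by a sum of a geometric number of $\theta^L$-increments, and Minkowski's inequality transfers the moment bound. That truncation-and-bootstrap mechanism is the key device missing from your outline; without it, or some comparable way to control the contribution of long trajectories (equivalently, of many small jumps when $s$ is close to $2$), the tail bound does not go through.

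Two smaller remarks. First, your recursion is ill-posed at the first step, since $[0,J_0)=[0,0)=\varnothing$; this is cosmetic but should be fixed. Second, the identity $\E[\theta]=1/\P(0\text{ is a cut-point})$ is correct by renewal theory, but note that, used as you propose, it only gives the $\delta=0$ case of the lemma and cannot by itself furnish the decay near $x=L$ that the late regime requires.
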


For technical reasons, it will be convenient to extract a subsequence from  $(x_k)_{k\in \mathbb{Z}^\ast}$ where each element is also surrounded by adjacent cut-points.
For $k \geq 2$, an element $x_k$ is called a strong cut-point if $\theta_{k-1} = \theta_{k} = 1$. 
Similarly, for $k \leq  -2$, the cut-point $x_k$  is called strong if $\theta_{k+1} = \theta_{k} = 1$. 
Define  
\begin{equation*} 
\begin{split}
n_{1} &= \min\{ n > 1 \colon \theta_{2n-1} = \theta_{2n} = 1 \}
\quad \textrm{and, for $k\geq 1$,} \\
n_{k+1} &= \min\{ n > n_{k} \colon \theta_{2n-1} = \theta_{2n} = 1 \}.
\end{split}
\end{equation*}
Analogously, on the left, define
\begin{equation*} 
\begin{split}
n_{-1} &= \max\{ n < 1 \colon \theta_{2n+1} = \theta_{2n} = 1  \}
\quad \textrm{and, for $k\geq 1$,} \\
n_{-(k+1)} &= \max\{ n < n_{-k}  \colon \theta_{2n+1} = \theta_{2n} = 1 \}.
\end{split}
\end{equation*}
(It is not difficult to verify that the $min$ and $max$ run over nonempty sets and are well-defined.)

We now will pick every other strong cut-point. 
Define, for each $k \in \mathbb{Z} \setminus \{0\}$, 
\begin{equation}
v_k = x_{2n_{k}}.
\end{equation}
This gives rise to a sequence $\{ v_k, ~ k \in \mathbb{Z} \setminus \{0\}\}$ of strong cut-points spanning $\mathbb{Z}$. 

Analogously to \eqref{xi}, we define
\begin{equation}
\label{eta}
\eta_k = v_{k+1} - v_{k} \quad \textrm{ and } \quad \eta_{-k} = v_{-k} - v_{-(k+1)}.
\end{equation} 
As before, we have that $\{\eta_k \colon k\in \mathbb{Z} \setminus \{0\}\}$ forms an i.i.d.\ sequence.
We denote by $\eta$ a random variable with the common distribution of $\{\eta_k \colon k\in \mathbb{Z} \setminus \{0\}\}$.
The following result is an extension of Lemma~\ref{Lemmaxi}. 
\begin{lemma}
\label{Lemmaeta}
For every $0 < \delta < s-2$,
\begin{equation*}
\E\left[\eta^{1+\delta}\right] < \infty.
\end{equation*}
\end{lemma}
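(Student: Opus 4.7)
The plan is to express $\eta$ as a sum of a random, geometrically distributed number of $\theta$-variables and then reduce the $(1+\delta)$-moment bound to Lemma~\ref{Lemmaxi}. First, using the telescoping identity $\theta_n = x_{n+1}-x_n$, I would rewrite
\[
\eta_k \;=\; v_{k+1}-v_k \;=\; x_{2n_{k+1}}-x_{2n_k} \;=\; \sum_{j=2n_k}^{2n_{k+1}-1}\theta_j,
\]
displaying $\eta_k$ as a sum of $2(n_{k+1}-n_k)$ consecutive $\theta$'s. The remaining work is to control simultaneously the number of summands and the size of each summand.

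Second, I would exploit the fact that the pairs $P_j:=(\theta_{2j-1},\theta_{2j})$ are i.i.d., and that $n_k$ is by construction the $k$-th index at which the positive-probability event $\{P_j=(1,1)\}$ occurs. Consequently $M := n_{k+1}-n_k$ is geometric with parameter $p := \P(\theta=1)^2$, and in particular $\mathbb{E}[M^r]<\infty$ for every $r\geq 0$. Applying the strong Markov property at the stopping time $n_k$, the intermediate pairs $P_{n_k+1},\dots,P_{n_{k+1}-1}$ are, conditionally on $M$, i.i.d.\ copies of $P_1$ conditioned on $\{P_1\neq(1,1)\}$. Setting $Y_i := \theta_{2(n_k+i)-1}+\theta_{2(n_k+i)}$, we obtain
\[
\eta_k \;=\; 2 \;+\; \sum_{i=1}^{M-1} Y_i,
\]
the constant $2$ accounting for the contributions $\theta_{2n_k}=\theta_{2n_{k+1}-1}=1$ coming from the strong-cut-point pairs at both endpoints. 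A generic $Y_i$ satisfies
\[
\mathbb{E}\bigl[Y_i^{1+\delta}\bigr] \;\leq\; \frac{1}{1-p}\,\mathbb{E}\bigl[(\theta+\theta')^{1+\delta}\bigr] \;\leq\; \frac{2^{1+\delta}}{1-p}\,\mathbb{E}[\theta^{1+\delta}],
\]
which is finite by Lemma~\ref{Lemmaxi} since $\delta<s-2$.

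Third, I would combine the two estimates using the power-mean inequality: for nonnegative $a_1,\dots,a_n$ and $n\geq 1$, one has $\bigl(\sum_{i=1}^n a_i\bigr)^{1+\delta} \leq n^{\delta}\sum_{i=1}^n a_i^{1+\delta}$. Conditioning on $M$ and using the conditional i.i.d.\ structure of the $Y_i$'s, this yields
\[
\mathbb{E}\Bigl[\Bigl(\sum_{i=1}^{M-1} Y_i\Bigr)^{1+\delta}\Bigr] \;\leq\; \mathbb{E}\bigl[(M-1)^{1+\delta}\bigr]\,\mathbb{E}\bigl[Y^{1+\delta}\bigr] \;<\; \infty,
\]
and together with $(a+b)^{1+\delta}\leq 2^{\delta}(a^{1+\delta}+b^{1+\delta})$ this gives the desired bound $\mathbb{E}[\eta^{1+\delta}]<\infty$.

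No serious obstacle is anticipated: the only delicate step is the application of the strong Markov property producing the conditional i.i.d.\ law of the intermediate pairs, which is standard for i.i.d.\ sequences sampled at stopping times; everything else is a routine Wald-type computation combined with the already-established moment bound in Lemma~\ref{Lemmaxi}.
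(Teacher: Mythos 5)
Your argument is essentially the same as the paper's: both write $\eta = v_{k+1}-v_k = \sum \theta_j$, observe that the number of intervening pairs is geometric, use the strong Markov property to get conditional i.i.d.\ copies of the pair-sum conditioned on not being $(1,1)$, and reduce to Lemma~\ref{Lemmaxi}. The only (cosmetic) difference is the final estimate: you close with the power-mean inequality $\bigl(\sum_{i=1}^{m} a_i\bigr)^{1+\delta}\leq m^{\delta}\sum a_i^{1+\delta}$ and condition on the geometric variable, whereas the paper invokes the Minkowski-norm computation already used for Lemma~\ref{Lemmaxi}; both are routine and correct.
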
 

Given the new sequence of strong cut-points $\{ v_k, ~ k \in \mathbb{Z} \setminus \{0\}\}$, we will partition $\mathbb{Z}$ into intervals $V_{k}$, where 
\begin{equation}
\label{e:def_V_0}
V_0 = \left\{x \in \Z \colon v_{-1} < x < v_{1} \right\};
\end{equation}
for each $k > 0$,
\begin{equation}
\label{e:def_V_k_pos}
V_{2k-1} = \{v_k\}, \quad V_{2k} = \left\{x \in \Z \colon v_{k} < x < v_{k+1} \right\};
\end{equation}
and, for each $k < 0$,
\begin{equation}
\label{e:def_V_k_neg}
    V_{2k+1} = \{v_k\}, \quad V_{2k} = \left\{x \in \Z \colon v_{k - 1} < x < v_{k} \right\}.
\end{equation}
See Figure \ref{f:cut_points_and_intervals} for an illustration of these sets.

\begin{center}
\begin{figure}[htb!]
\begin{tikzpicture}[scale=0.17, every node/.style={circle, fill, inner sep=.75pt}]
    % Número de sítios
    \def\n{80}
    
    % Desenhar os sítios
    \foreach \x in {1,...,\n} {
        \node (s\x) at (\x, 0) {};
    }
    \foreach \x in {1,...,79} {
        \draw (s\x) -- (s\the\numexpr\x+1);
    }

    % Definir ângulos ajustados
    \def\angleShort{40}  % Para conexões curtas (3 unidades)
    \def\angleMedium{50} % Para conexões médias (5 unidades)
    \def\angleLong{35}   % Para conexões longas (7+ unidades)

    % Desenhar conexões de longo alcance com ângulos ajustados
    \draw[thick] (s1) to[out=\angleLong, in=180-\angleLong] (s9);
    \draw[thick] (s2) to[out=\angleMedium, in=180-\angleMedium] (s7);
    \draw[thick] (s3) to[out=\angleShort, in=180-\angleShort] (s6);
    \draw[thick] (s4) to[out=\angleMedium, in=180-\angleMedium] (s8);
    \draw[thick] (s11) to[out=\angleMedium, in=180-\angleMedium] (s15);
    \draw[thick] (s13) to[out=\angleShort, in=180-\angleShort] (s19);
    \draw[thick] (s16) to[out=\angleShort, in=180-\angleShort] (s18);
    \draw[thick] (s21) to[out=\angleShort, in=180-\angleShort] (s24);
    \draw[thick] (s24) to[out=\angleMedium, in=180-\angleMedium] (s29);
    \draw[thick] (s25) to[out=\angleShort, in=180-\angleShort] (s28);
    \draw[thick] (s31) to[out=\angleMedium, in=180-\angleMedium] (s36);
    \draw[thick] (s32) to[out=\angleShort, in=180-\angleShort] (s35);
    \draw[thick] (s34) to[out=\angleMedium, in=180-\angleMedium] (s39);
    \draw[thick] (s41) to[out=\angleLong, in=180-\angleLong] (s49);
    \draw[thick] (s42) to[out=\angleShort, in=180-\angleShort] (s45);
    \draw[thick] (s46) to[out=\angleMedium, in=180-\angleMedium] (s48);
    \draw[thick] (s51) to[out=\angleMedium, in=180-\angleMedium] (s56);
    \draw[thick] (s53) to[out=\angleShort, in=180-\angleShort] (s55);
    \draw[thick] (s57) to[out=\angleShort, in=180-\angleShort] (s59);
    \draw[thick] (s61) to[out=\angleMedium, in=180-\angleMedium] (s66);
    \draw[thick] (s63) to[out=\angleShort, in=180-\angleShort] (s65);
    \draw[thick] (s64) to[out=\angleLong, in=180-\angleLong] (s69);
    \draw[thick] (s71) to[out=\angleMedium, in=180-\angleMedium] (s74);
    \draw[thick] (s72) to[out=\angleShort, in=180-\angleShort] (s73);
    \draw[thick] (s76) to[out=\angleMedium, in=180-\angleMedium] (s80);
    \draw[thick] (s77) to[out=\angleShort, in=180-\angleShort] (s79);

    % Destacar os cut-points
    \foreach \x in {10, 20, 30, 40, 50, 60, 70, 75} {
        \node[red] at (\x,0) [circle, fill=red, inner sep=1pt] {};
    }
    % Adicionar rótulos abaixo do 1º, 3º, 5º e 7º cut-points (10, 30, 50, 70)
    \node[above, draw=none, fill=none] at (10,0) {$v_{k-1}$};
    \node[above, draw=none, fill=none] at (30,0) {$v_{k}$};
    \node[above, draw=none, fill=none] at (50,0) {$v_{k+1}$};
    \node[above, draw=none, fill=none] at (70,0) {$v_{k+2}$};

    %Adicionar rótulos dos intervalos entre os cut-points
    \node[ draw=none, fill=none] at (20,-5) {$V_{2k-2}$};
    \node[ draw=none, fill=none] at (40,-5) {$V_{2k}$};
    \node[ draw=none, fill=none] at (60,-5) {$V_{2k+2}$};

    % % Linhas horizontais
    % \draw[thin] (0,10)--(80,10);
    % \draw[thin] (0,20)--(80,20);

    % Linhas verticais e pontos nas interseções
    \foreach \x in {10, 30, 50, 70} {
        % \draw[thin, red] (\x,0) -- (\x,-4);
        \draw[thin] (\x+1,0) -- (\x+1,-4);
        \draw[thin] (\x-1,0) -- (\x-1,-4);
        \node at (\x-1,0) [circle, fill=black, inner sep=1pt] {};
        \node at (\x+1,0) [circle, fill=black, inner sep=1pt] {};
    }

    % Adicionar setas e rótulos para os intervalos
    \foreach \x in {10, 30, 50} {
        \draw[thin, <->] (\x+1,-3) -- (\x+19,-3);
        }
    \draw[thin, ->] (1,-3) -- (9,-3);
    \draw[thin, <-] (71,-3) -- (80,-3);
 
\end{tikzpicture}
\caption{
The representation of the strong cut-points $v_k$ and the corresponding sets $V_{2k}$.}
\label{f:cut_points_and_intervals}
\end{figure}
\end{center}

The random variables $|V_k|$ are independent, but their laws depend on $k$.
For odd values of $k$, the intervals are degenerate and $|V_k|$=1.
When $k \neq 0$ is even, $|V_k| = \eta_{k/2} - 1$, hence, as a consequence of Lemma~\ref{Lemmaeta} we have 
\begin{lemma}
\label{LemmaV}
For every $0 < \delta <s-2$ and $k \neq 0$
\begin{equation}
\label{moment}
\E\Big[|V_k|^{1+\delta}\Big] < \infty.
\end{equation}
\end{lemma}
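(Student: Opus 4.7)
The statement essentially reduces to Lemma~\ref{Lemmaeta} through a case split on the parity of $k$, so my plan is to carry out this reduction carefully, keeping the index bookkeeping explicit.

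The first step is to dispose of the odd case. For $k>0$ odd, write $k=2\ell-1$ with $\ell\geq 1$, and note that by the definition \eqref{e:def_V_k_pos} one has $V_k=\{v_\ell\}$, so $|V_k|=1$ deterministically; the analogous fact, based on \eqref{e:def_V_k_neg}, handles $k<0$ odd. In either case $\E[|V_k|^{1+\delta}]=1<\infty$ regardless of $\delta$, so the bound \eqref{moment} is immediate.

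The main (still very short) step is the even case. For $k>0$ even, write $k=2\ell$ with $\ell\geq 1$; definition \eqref{e:def_V_k_pos} says $V_k=\{x\in\Z\colon v_\ell<x<v_{\ell+1}\}$, whose cardinality equals $v_{\ell+1}-v_\ell-1=\eta_\ell-1$ by \eqref{eta}. For $k<0$ even, write $k=2\ell$ with $\ell\leq -1$; definition \eqref{e:def_V_k_neg} gives $V_k=\{x\in\Z\colon v_{\ell-1}<x<v_\ell\}$, which has cardinality $v_\ell-v_{\ell-1}-1=\eta_\ell-1$, again by \eqref{eta}. In either situation $|V_k|\leq\eta_\ell$, so by monotonicity of the $(1+\delta)$-th power and by the i.i.d.\ structure,
\begin{equation*}
\E\big[|V_k|^{1+\delta}\big]\leq \E\big[\eta_\ell^{1+\delta}\big]=\E\big[\eta^{1+\delta}\big]<\infty,
\end{equation*}
where the last inequality is precisely the content of Lemma~\ref{Lemmaeta}, valid for any $0<\delta<s-2$.

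There is no real obstacle here; the only thing that needs care is matching the index $k$ of $V_k$ with the index of the underlying strong cut-point gap $\eta_\ell$, and observing that the discrete cardinality of the open interval $(v_\ell,v_{\ell+1})$ is $\eta_\ell-1$ rather than $\eta_\ell$, which only helps. All the probabilistic content has already been absorbed into the previous lemma, and the independence of the $|V_k|$ for distinct even $k$ follows from the i.i.d.\ property of $(\eta_k)_{k\in\Z\setminus\{0\}}$ established just before the statement.
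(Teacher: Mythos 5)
Your proof is correct and takes essentially the same approach as the paper: split into the trivial odd case where $|V_k|=1$ and the even case where $|V_k|=\eta_{k/2}-1$, then invoke Lemma~\ref{Lemmaeta}. The paper presents this as a one-line deduction in the surrounding text rather than as a formal proof, but the reasoning is the same.
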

\begin{remark}
\label{r:law_V_0}
    Note that the above discussion does not include $V_0$. 
    In fact, as seen in \eqref{e:def_V_0}, \eqref{e:def_V_k_pos} and \eqref{e:def_V_k_neg} the law of $V_0$ differs from the others.
\end{remark}

We will also partition the set of open edges according to the interval $V_k$ where an edge originates.
Let $\mathcal{E} = \cup_{k \in \Z} \mathcal{E}_{k}$, where 
\[
\mathcal{E}_0 = \big\{ \langle x,y \rangle \in \mathcal{E} \colon \{x,y\} \cap V_0 \neq \varnothing \big\},
\]
and
\begin{equation*}
\mathcal{E}_{k} = \begin{cases} 
\bigl\{ \langle x,y \rangle \in \mathcal{E} \colon \max\{x,y\} \in V_{k} \bigr\}, &~\textrm{ if } k \in \Z_-,\\
\big\{ \langle x,y \rangle \in \mathcal{E} \colon \min\{x,y\} \in V_{k} \big\}, &~\textrm{ if } k \in \Z_+.
\end{cases}
\end{equation*}
When $k$ is odd, $|\mathcal{E}_k| =1$. 
In fact, since $v_k$ is a strong cut-point, either $\mathcal{E}_{2k-1} = \{\langle v_k, v_k + 1 \rangle \}$ or $\mathcal{E}_{2k+1} = \{\langle v_k - 1, v_k \rangle \}$ according to whether $k$ is  positive or negative.
The random variables $\{|\mathcal{E}_k|, ~ k\in 2\mathbb{Z} \setminus \{0\}  \}$ are i.i.d.\
The next result shows that they have finite moments of order greater than one. 
\begin{lemma}
\label{l:expectation_ek} 
If $\varepsilon>0$ is sufficiently small, then for every $k\neq 0$, 
\[\E\Big[\left\vert\mathcal{E}_k \right\vert^{1+\varepsilon}\Big] < \infty.\]
\end{lemma}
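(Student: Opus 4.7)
The plan is to reduce the $(1+\varepsilon)$-moment of $|\mathcal{E}_k|$ to that of $\eta:=\eta_{k/2}$ and invoke Lemma~\ref{Lemmaeta}. For odd $k$ the claim is immediate, since $|\mathcal{E}_k|=1$ almost surely, as already noted just before the statement. Since the random variables $|\mathcal{E}_k|$ for even $k\neq 0$ are identically distributed, it suffices to handle $k=2$ (the case $k<0$ being entirely symmetric); write $\eta:=\eta_1$.

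First I would use the cut-point property of $v_2$: no open edge crosses $v_2$, so every $\langle x,y\rangle\in\mathcal{E}_2$ with $x<y$ satisfies $x\in V_2$ and $y\leq v_2$. Hence both endpoints of such an edge lie in $(v_1,v_2]$, and
\[
|\mathcal{E}_2| \leq N := \#\bigl\{\langle i,j\rangle\in\mathcal{E}: v_1\leq i<j\leq v_2\bigr\}.
\]
For a \emph{deterministic} interval of length $n$, let $Z_n$ denote the corresponding count. Because $Z_n$ is a sum of independent Bernoullis of parameters $(j-i)^{-s}$ and $s>2$ makes $\zeta(s)<\infty$, one has $\E[Z_n]\leq(n+1)\zeta(s)$, $\V(Z_n)\leq \E[Z_n]=O(n)$, and therefore $\E[Z_n^2]=O(n^2)$. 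Lyapunov's inequality at exponents $1+\varepsilon<2$ yields $\E[Z_n^{1+\varepsilon}]\leq\E[Z_n^2]^{(1+\varepsilon)/2}=O(n^{1+\varepsilon})$.

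The crux of the plan is to promote this into the conditional bound $\E[N^{1+\varepsilon}\mid\eta=n]\leq Cn^{1+\varepsilon}$. For this I would decompose the event $\{v_1=a,\,v_2=a+n\}$ into (i) closures of edges crossing the six positions $a,a\pm1,a+n,a+n\pm1$ that make $a$ and $a+n$ strong cut-points, and (ii) the requirement that no intermediate strong cut-point lies in $(a,a+n)$. The edges involved in (i) that are not entirely interior to $(a,a+n]$ are independent of $N$, while the only interior edges forced closed (the edges $\langle i,a+n\rangle$ for $a<i<a+n-1$, imposed by $a+n-1$ being a cut-point) can only decrease $N$ by FKG. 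The constraint (ii) is an increasing event on the interior configuration, but destroying the potential strong cut-point pattern at any particular interior vertex requires only boundedly many open edges; hence its effect on the moments of $N$ should be bounded by an $O(1)$ factor, uniformly in $n$.

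Granting the conditional estimate, one concludes
\[
\E\bigl[|\mathcal{E}_2|^{1+\varepsilon}\bigr]\leq\E\bigl[N^{1+\varepsilon}\bigr] = \E\bigl[\E[N^{1+\varepsilon}\mid\eta]\bigr] \leq C\,\E\bigl[\eta^{1+\varepsilon}\bigr],
\]
which is finite by Lemma~\ref{Lemmaeta} for any $\varepsilon\in(0,s-2)$. The main obstacle will be rigorously justifying the conditional bound: the cut-point event mixes decreasing constraints (forcing closures) with increasing ones (ruling out intermediate strong cut-points) on the interior configuration, and one must verify that neither disrupts the unconditional $O(n^{1+\varepsilon})$ scaling. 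I expect this to follow from the FKG inequality for the decreasing part combined with an explicit combinatorial estimate of the ``cost'' of the increasing part.
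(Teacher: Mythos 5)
Your plan takes a genuinely different route from the paper, and the gap you flag at the end is, in my view, a real one rather than a routine verification.

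The paper never conditions on $\eta=n$ (equivalently on $v_2=v_1+n$). Instead it conditions only on the \emph{left} endpoint $\{v_{k/2}=z\}$, which is a stopping event of the left-to-right exploration: it is measurable with respect to edges $\langle i,j\rangle$ with $\min\{i,j\}\leq z$. The count $N_M^z$ of open edges with $\min\{x,y\}\in\llbracket z+1,z+M\rrbracket$ depends on a disjoint set of edges and is therefore \emph{exactly independent} of $\{v_{k/2}=z\}$. The argument then splits on whether $|V_k|$ exceeds $M$: the event $\{|V_k|\geq M\}$ is controlled by Lemma~\ref{LemmaV}, and on $\{|V_k|<M\}$ one has $|\mathcal{E}_k|\leq N_M^z$, whose (unconditional) tail is bounded by Chebyshev (Lemma~\ref{LemmaCaixa}). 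Taking $M=n^{1/(1+\delta)}$ and summing over $n$ gives the result. No conditioning on the right endpoint or on interior structure is ever needed.

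Your route conditions on $\{v_1=a, v_2=a+n\}$ and asks for $\E[N^{1+\varepsilon}\mid v_1=a,\,v_2=a+n]\leq Cn^{1+\varepsilon}$. This is the hard step and your justification for it does not hold up as stated. The event $\{v_2=a+n\}$ given $\{v_1=a\}$ is not just a few local closures near the two boundaries: it also contains the constraint that none of the roughly $\Theta(n)$ even-indexed cut-points $x_{2m}$ in $(a,a+n)$ is a strong cut-point, i.e.\ $\theta_{2m-1}=\theta_{2m}=1$ fails for all such $m$. This is an increasing event whose probability decays geometrically in $n$, so you are conditioning on a rare increasing event while $N$ is increasing; FKG tells you the conditioning \emph{raises} the moment, and you need a quantitative bound on how much. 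Your heuristic that destroying a strong cut-point pattern at one vertex costs ``boundedly many open edges'' does not give an $O(1)$ uniform factor, because you must rule out the pattern at order-$n$ locations simultaneously, and the positions of the relevant $x_{2m}$ are themselves determined by the configuration you are conditioning on. One would need, at minimum, a comparison of the conditioned edge-field with an unconditioned one that is uniform in $n$; I do not see how the proposal supplies that. (It is also not clear FKG is directly usable here: the conditioning event is an intersection of increasing \emph{and} decreasing constraints, so you cannot simply apply FKG to the joint event.) The paper's choice to condition only on the stopping-time location $v_{k/2}=z$ is precisely what sidesteps all of this, trading the delicate conditional estimate for two unconditional ingredients: the $(1+\delta)$-moment of $|V_k|$ and a variance bound on edge counts in deterministic windows.
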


\begin{remark}
The law of $|\mathcal{E}_0|$ differs from the others because the same holds for $V_0$ (see Remark \ref{r:law_V_0}).
\end{remark}

The remainder of this section is dedicated to the proof of Lemmas~\ref{Lemmaxi}, \ref{Lemmaeta}, and \ref{l:expectation_ek}.
\medskip

For the proof of Lemmas~\ref{Lemmaxi}, it will be convenient to first replace the original sequence $(x_k)_{k \in \mathbb{Z} \setminus \{0\}}$ with a modified version $(x_k^L)_{k \in \mathbb{Z} \setminus \{0\}}$, which is constructed in a similar manner but disregards edges shorter than $L$.
That is, for any fixed positive integer $L$, we set
\begin{equation*}
x_{1}^L = \min\{ x > x_+ \colon \textrm{ $\langle i,j  \rangle$ is closed for all $ 0 \leq i < x <  j$ with $j-i > L$}\}, 
\end{equation*}
and, for each $n \geq 1$,
\begin{equation*} 
x_{n+1}^L = \min\{ x > x_n^L \colon \textrm{ $\langle i,j  \rangle$ is closed for all $ x_n^L \leq i < x <  j$ with $j-i > L$}\}.
\end{equation*}
Similarly, for $x_{n}^L$, with $n \leq -1$.

We recover the original sequence $(x_k)_{k\in \mathbb{Z} \setminus \{0\}}$ simply by taking $L=1$.
Writing,
\begin{equation*}
\theta_n^L = x_{n+1}^L - x_{n}^L \quad \textrm{ and } \quad \theta_{-n}^L = x_{-n}^L - x_{-(n+1)}^L.
\end{equation*} 
gives rise to an i.i.d.\ sequence $\{ \theta_k^L \colon k \in \mathbb{Z} \setminus \{0\}\}$. 
 Let ${\theta^L}$ be a random variable distributed as any of the elements in that sequence.
 The content of the next result is similar to that in Lemma~\ref{Lemmaxi} except that $\theta$ is replaced by ${\theta^L}$ for $L$ large.

\begin{lemma}\label{LemmaXiL} 
Let $0 < \delta < s-2$.
There exists $L_o = L_o(s,\delta)$ such that, for every $L \geq L_o(s,\delta)$,
\begin{equation*}
\E\Big[{(\theta^L)}^{1+\delta}\Big] < \infty.
\end{equation*}
\end{lemma}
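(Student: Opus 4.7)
The plan is to bound the tail $\P(\theta^L > k)$ by a power law with exponent strictly larger than $1+\delta$, from which the moment bound follows by integration. By translation invariance the sequence $(\theta^L_n)$ is i.i.d., so we may fix $x_n^L = 0$ and analyze $\theta^L = x_1^L$ alone. In these coordinates, $\{\theta^L > k\}$ is the event that every integer $y \in \{1,\ldots,k\}$ is \emph{covered}, meaning there is an open edge $\langle i,j\rangle$ with $0 \leq i < y < j$ and $j - i > L$; equivalently, a chain of overlapping open long edges rooted at $0$ reaches past $k$.

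The elementary single-site coverage estimate
\[
q_L := \P(y \text{ covered by some open long edge}) \leq \sum_{\ell > L} \frac{\ell - 1}{\ell^s} \leq \frac{C_1}{L^{s-2}}
\]
vanishes as $L \to \infty$ thanks to $s > 2$; this is the subcriticality input that will drive the bounds.

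I would then decompose $\{\theta^L > k\}$ along a \emph{minimal} covering chain $e_r = \langle i_r, j_r\rangle$, $r = 1,\dots,m$, with $i_1 = 0$, $i_2 < \cdots < i_m \leq k - 1 < k < j_m$, $j_1 < \cdots < j_m$, overlap condition $i_{r+1} \leq j_r - 1$, and $j_r - i_r > L$. A union bound gives
\[
\P(\theta^L > k) \leq \sum_{m \geq 1} \sum_{\text{chains of length } m} \prod_{r=1}^m \frac{1}{(j_r - i_r)^s}.
\]
Summing first over the rightmost edge $e_m$ (constrained to satisfy $j_m \geq k+1$ and $i_m \leq k-1$) yields a factor $\leq C_2 / k^{s-1}$ by a computation analogous to the $m = 1$ case, while the remaining $m-1$ edges form a chain covering $[1, i_m]$ whose total weight is bounded inductively by $q_L^{m-1}$. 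Summing the resulting geometric series in $m$, valid once $L$ is so large that $q_L < 1$, gives
\[
\P(\theta^L > k) \leq \frac{C_3}{(1 - q_L)\,k^{s-1}} \qquad (k > L).
\]
The moment bound then follows from
\[
\E\bigl[(\theta^L)^{1+\delta}\bigr] \leq C + C' \sum_{k > L} k^\delta \cdot \frac{1}{k^{s-1}} = C + C' \sum_{k > L} \frac{1}{k^{s-1-\delta}} < \infty,
\]
since $\delta < s - 2$ is equivalent to $s - 1 - \delta > 1$.

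The delicate step is the chain decomposition: the naive ``site $y$ is covered'' bound is only $q_L$, constant in $k$, so the polynomial decay in $k$ is obtained solely by exploiting the chain structure and in particular the fact that the rightmost edge must have length $\geq k + 1 - i_m$. Making the inductive estimate ``chain of length $m-1$ covering $[1, i_m]$ has probability $\leq q_L^{m-1}$'' rigorous is cleanest via a BK-type inequality applied to the disjoint occurrence of the distinct chain-edges, or via a direct multidimensional sum over chain configurations with careful bookkeeping of the overlap constraints; this is the main obstacle of the argument.
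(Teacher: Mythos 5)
Your proposal follows a genuinely different route from the paper's. The paper replaces the union bound over chains by a \emph{greedy} backward exploration (the variables $Y_k$ in its Lemma~\ref{l:hit_z}), whose unique chain leads to the convolution bound $\P(\sum_{k=1}^m Y_k=-n)\leq c_o^m/n^{s-1}$ with a constant $c_o>1$ that does \emph{not} improve as $L$ grows; the geometric series in $m$ then diverges, and the paper compensates by truncating at $m\approx \log n$ and using a separate estimate $(1-a(L))^m$ for large $m$. Your approach instead aims to track a small per-step factor $q_L\to 0$ and sum a convergent geometric series directly, with no truncation. This can in fact be pushed through, and would deliver the sharper tail $\P(\theta^L>k)\lesssim \max(k,L)^{-(s-1)}$, stronger than the paper's $\lesssim k^{-(1+\delta')}$.

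However, the decomposition as you have stated it does not add up, and the gap is not the one you flag. First, ``summing over the rightmost edge $e_m$ gives $\leq C_2/k^{s-1}$'' is false: for fixed $i_m$ the sum over $j_m>k$, $j_m-i_m>L$ gives $\asymp\max(k-i_m,L)^{-(s-1)}$, which equals $k^{-(s-1)}$ only when $i_m$ is near $0$ and degrades to $L^{-(s-1)}$ when $i_m$ is near $k$; and if you sum also over $i_m\in[0,k-1]$ you get $\asymp q_L$, not $k^{-(s-1)}$. Second, taking ``sub-chain weight $\leq q_L^{m-1}$'' (which is the right order for \emph{unconstrained} chains) \emph{uniformly in $i_m$} and multiplying by the $e_m$-sum $\asymp q_L$ yields $\asymp q_L^m$ with no decay in $k$ at all; the two factors as you have split them cannot both hold. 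The polynomial decay in $k$ only emerges if you retain the $i_m$-dependence on both sides: the weight of an $(m-1)$-chain forced to cover $[1,i_m]$ should itself decay like $(Cq_L)^{m-2}\max(i_m,L)^{-(s-1)}$, and the convolution of this with the last-edge factor $\max(k-i_m,L)^{-(s-1)}$ over $i_m$ produces $(Cq_L)^{m-1}\max(k,L)^{-(s-1)}$. Thus the ``main obstacle'' is not a BK-type inequality (the chain edges are already independent, so BK contributes nothing); it is proving and iterating this convolution estimate, which is precisely the content of the paper's Lemma~\ref{l:hit_z}, refined so that the constant $c_o$ is replaced by $Cq_L$ with $C$ independent of $L$. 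Once that refined convolution is in place, your strategy closes as you describe; without it, the claimed bound $\P(\theta^L>k)\leq C_3/[(1-q_L)k^{s-1}]$ is unsubstantiated.
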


Before we proceed to prove Lemma \ref{Lemmaxi} let us introduce the \textit{backward exploration}, reminiscent of a procedure found in \cite[Section 2]{GGNR}.
\begin{equation*}
Y^L = \sup \big\{ i < 0 \colon \text{$\langle i,j\rangle$ is open for some $j > 0$ with $j-i > L$} \big\}
\end{equation*}
with the convention $\sup \varnothing = -\infty$.
Let us denote $a(L)=\P(Y^L = -\infty)$.
Since $s > 2$, $a(L)$ is positive and converges to $1$ as $L$ goes to $\infty$, in fact,
\begin{equation}
\label{eq:aL}
\begin{split}
a(L) & = \prod_{i < 0} \prod_{\substack{j > 0 \\ j > i+L } } \P\big(\text{$\langle i,j \rangle$ is closed}\big)\\
& = \prod_{k \geq 2}\prod_{\substack{\ell \geq k \\ \ell > L }} \left(1 - \frac{1}{\ell^s} \right)    \underset{L \to \infty}{\longrightarrow} 1.
\end{split}
\end{equation}
We may omit the dependence on $L$.
Let $Y_1, Y_2, \dots$ be i.i.d.\ random variables with the same distribution as $Y$.
It is readily seen that for every positive integer $n$  the following upper bound holds:
\begin{equation*}
\P \bigg( \sum_{k=1}^m Y_k = -n \bigg) \leq \big(1-a(L)\big)^m,
\end{equation*}
for all $m\geq 1$.
For our purposes, we will need a control that depends on $n$:
\begin{lemma}
\label{l:hit_z}
There exists a constant $c_o>0$ not depending on $L$ such that for every $m \geq 1$ and $n \geq 1$,
\begin{equation}
\label{eq:hit_z}
\P\bigg( \sum_{k=1}^m Y_k = -n  \bigg) <  \frac{c_o^m}{n^{s-1}}.
\end{equation}
\end{lemma}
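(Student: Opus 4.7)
The plan is to combine a uniform (in $L$) tail bound on a single $Y$ with a convolution inequality for heavy-tailed summands, then iterate via induction on $m$. First I would prove that $\P(Y=-t)\le C_1/t^{s-1}$ for a constant $C_1=C_1(s)$ independent of $L$. The event $\{Y=-t\}$ forces the existence of at least one open edge $\langle -t,j\rangle$ with $j>0$ and $j+t>L$, so the union bound gives
\[
\P(Y=-t)\;\le\;\sum_{\substack{j>0\\ j+t>L}}\frac{1}{(j+t)^s}\;=\;\sum_{k>\max(t,L)}\frac{1}{k^s}\;\le\;\frac{1}{(s-1)\,\max(t,L)^{s-1}}\;\le\;\frac{C_1}{t^{s-1}},
\]
with $C_1=1/(s-1)$. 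Crucially, the $L$-dependence disappears because $\max(t,L)\ge t$.

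Next I would establish the deterministic convolution estimate
\[
\sum_{t=1}^{n-1}\frac{1}{t^{s-1}(n-t)^{s-1}}\;\le\;\frac{A}{n^{s-1}},\qquad n\ge 2,
\]
with $A:=2^{s}\sum_{k\ge 1}k^{-(s-1)}<\infty$ since $s>2$. This follows by splitting the sum at $n/2$: for $t\le n/2$ one has $n-t\ge n/2$, so $(n-t)^{-(s-1)}\le (2/n)^{s-1}$, factoring the convergent tail $\sum_t t^{-(s-1)}$ out; the other half is handled by the symmetry $t\leftrightarrow n-t$.

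Finally I would prove by induction on $m$ that $\P\!\left(\sum_{k=1}^{m}Y_k=-n\right)\le B_m\,n^{-(s-1)}$, with base case $m=1$ supplied by the first display and $B_1=C_1$. For $m\ge 2$, conditioning on $Y_m$ and using independence gives
\[
\P\Bigl(\sum_{k=1}^{m}Y_k=-n\Bigr)\;=\;\sum_{t=1}^{n-m+1}\P(Y_m=-t)\,\P\Bigl(\sum_{k=1}^{m-1}Y_k=-(n-t)\Bigr).
\]
Inserting the tail estimate into the first factor and the inductive hypothesis into the second, then invoking the convolution estimate, yields the recursion $B_m\le C_1A\cdot B_{m-1}$, hence $B_m\le C_1(C_1A)^{m-1}\le (C_1A)^m$ (using that $A\ge 1$). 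Choosing any $c_o>C_1A$ delivers the strict inequality, and since $C_1$ and $A$ depend only on $s$, the constant is independent of $L$. The main (and really the only) delicate point is preserving $L$-independence of all constants throughout; this is secured by the uniform marginal tail bound in the first step, after which everything reduces to a standard heavy-tail convolution argument.
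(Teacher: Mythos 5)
Your proof is correct and follows essentially the same three-step scheme as the paper's: a uniform-in-$L$ tail bound $\P(Y=-t)\lesssim t^{-(s-1)}$, the convolution estimate $\sum_{t=1}^{n-1} t^{-(s-1)}(n-t)^{-(s-1)}\lesssim n^{-(s-1)}$, and induction on $m$ via conditioning on the last summand. The only cosmetic differences are that you obtain the marginal tail bound by a union bound rather than the paper's exact product formula, and you prove the convolution estimate by splitting the sum at $n/2$ rather than comparing to an integral.
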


\begin{proof}
We have
\begin{align*}
\P\left( Y_1 = -n  \right) &= \bigg[\, \prod_{k = 1}^{n-1}\prod_{\substack{\ell > k \\ \ell > L }} \left(1 - \frac{1}{\ell^s} \right) \bigg] \bigg[ 1 - \prod_{\substack{\ell > n \\ \ell > L }} \left(1 - \frac{1}{\ell^s} \right) \bigg] \\
&\leq  \bigg[ 1 - \prod_{\substack{\ell > n \\ \ell > L }} \left(1 - \frac{1}{\ell^s} \right) \bigg] \leq \frac{c_1}{n^{s-1}},
\end{align*}
for some positive constant $c_1 > 0$ that does not depend on either $n$ nor on $L$.
Therefore,
\begin{equation*}
\P\left( Y_1+Y_2 = -n  \right) = \sum_{k=1}^{n-1} \P\left( Y_1 = k  \right) \P\left( Y_2 = n-k  \right) \leq c_1^2 \sum_{k=1}^{n-1} \frac{1}{k^{s-1}}\frac{1}{(n-k)^{s-1}}.
\end{equation*}

For the sum appearing in the right-hand side, ignoring $k=1$ and $k=n-1$ we obtain the upper bound
\begin{align*}
\sum_{k=2}^{n-2} \frac{1}{k^{s-1}} \frac{1}{(n-k)^{s-1}} &= \frac{1}{n^{2s-3}} \sum_{k=2}^{n-2} \frac{1}{(k/n)^{s-1}}\frac{1}{(1 - k/n)^{s-1}}\frac{1}{n}\\ 
&<  \frac{2}{n^{2s-3}} \int_{\frac{1}{n}}^{\frac{1}{2}} \frac{1}{x^{s-1}(1-x)^{s-1}} dx \\
&<  \frac{2}{n^{2s-3}} \int_{\frac{1}{n}}^{\frac{1}{2}} \frac{1}{x^{s-1}(1/2)^{s-1}} dx \\
&< \frac{2^s}{n^{2s-3}}\frac{n^{s-2}}{s-2} = \frac{c_2}{n^{s-1}}.
\end{align*}
Including the terms $k=1$ and $k=n-1$ which are equal to $1/(n-1)^{s-1}$, we get   
\begin{equation*}
\P\left( Y_1+Y_2 = -n  \right) < \frac{c_1^2 c_3}{n^{s-1}},
\end{equation*}
where $c_3 > 0$ is some positive constant that does not depend neither on $n$ nor on $L$.

We can iterate the argument in order to obtain inductively,
\begin{equation*}
\P\bigg( \sum_{k=1}^m Y_k = -n  \bigg) < \frac{c_1^m c_3^{m-1}}{n^{s-1}} <  \frac{c_0^m}{n^{s-1}},
\end{equation*}
for $m \geq 1$, for a sufficiently large $c_0 > 1$ that does not depend neither on $n$ nor on $L$.
\end{proof}

\begin{proof}[Proof of Lemma \ref{LemmaXiL}]
The proof consists in obtaining a suitable estimate for the tail probabilities $\P( \theta^{L}_1 \geq n)$.
Since
\begin{equation}
\P( \theta^{L}_1 \geq n) = \sum_{z \in \mathbb{Z}_{+}} \P( \theta^{L}_1 \geq n, \, x_1^{L}=z).
\end{equation}
we now fix $z \in \mathbb{Z}_{+}$ and concentrate on estimating $\P( \theta^{L}_1 \geq n,\, x_1^{L}=z)$.

On the event $\{\theta_1^L \geq n,~ x^L_1 = z\}$ we have that, for every $z' \in (z, z+n]$, 
\begin{equation*}
\big\{ z \leq i < z' \colon \text{$\langle i,j\rangle$ is open for some $j > z'$ with $j-i > L$} \big\} \neq \varnothing,
\end{equation*}
otherwise we would obtain $x_{2}^L < z' \leq z+n$, hence $\theta_1^L < n$.
Therefore, on the same event, we can define recursively a  sequence by setting $z_1 = z+n$, and for $\ell \geq 1$, we define
\begin{equation*}
z_{\ell + 1} = \max\big\{ z \leq i < z_{\ell} \colon \textrm{$\langle i,j\rangle$ is open for some $j > z_{\ell}$ with  $j-i > L$} \big\},
\end{equation*}
if $z_{\ell} \neq z$, and if $z_{\ell} =z$, we define $z_{\ell +1} = z$.
Notice that this sequence has to stabilize at $z$, which means that there exists a random index $m$, such that $z= z_{m+1} = z_{m+2} = \cdots$.
Moreover, given $z_\ell$, each of the increments $z_{\ell+1} - z_{\ell}$ has the distribution of a  backward exploration conditioned on being larger than $z_{\ell} - z$.
Thus, we can write
\begin{equation*}
\begin{split}
 \P\left(\theta_1^L \geq n ~\vert~ x_1^L = z \right) & \leq \P\left(\exists~ m \geq 1 \textrm{ such that } z_{m+1} = z\right) \\
  & = \P\Big(\exists~ m \geq 1 \colon \sum_{k=1}^m Y_k = -n \Big).
\end{split}
\end{equation*}

Recall that we are assuming $\delta \in (0,s-2)$.
Fix $\alpha = (s-2-\delta)/2$. 
Pick $\beta > 0$ so small that $c_o^{\lceil \beta \log n \rceil} < n^{\alpha}$ for every $n \geq 1$.
We now pick $L_o = L_o(s,\delta)$ (in particular, not depending on $n$) such that for every $L> L_o$, we have $(1-a(L))^{\lceil \beta \log n \rceil} < 1/n^{1+\delta}$, which is possible in view of \eqref{eq:aL}.
Writing $M = \lceil \beta \log n \rceil$, we have
\begin{equation}
\begin{split}
 \P\Big(\exists~ m \geq 1  \colon \sum_{k=1}^m Y_k = -n \Big)
&\leq \sum_{m = 1}^{M} \P\Big( \sum_{k=1}^m Y_k = -n  \Big) + \sum_{m > M} \P\Big( \sum_{k=1}^m Y_k =- n  \Big) \\
&< \sum_{m = 1}^{M}  \frac{c_o^m}{n^{s-1}} + \sum_{m > M} (1-a)^m \\
& < \frac{c_o^{M}}{n^{s-1}} + \frac{ (1-a)^{M} }{a} < \frac{c}{n^{1+\delta}}.
\end{split}
\end{equation}
Then, for every sufficiently large $L$ we have
$\E[(\theta^L_1)^{1+\delta}] < \infty.$
\end{proof}

We now possess sufficient tools to prove Lemma~\ref{Lemmaxi} and Lemma~\ref{Lemmaeta}.

\begin{proof}[Proof of Lemma \ref{Lemmaxi}] Given $\delta \in (0, s-2)$, by Lemma \ref{LemmaXiL} we can fix $L$ such that the random variables $\theta_k^L$ have finite $(1+\delta)$-moment.

Write $A= \{x_k \colon k \in \mathbb{Z} \setminus \{0\}\}$ and $A_L = \{x_k^L \colon k \in \mathbb{Z} \setminus \{0\}\}$. 
Because $A \subset A_L$, for every $k \in \mathbb{Z}_{+}$ and $z\in \mathbb{Z}_+$, we can write
\[
\{x_k = z\} =  \bigcup_{m \geq k}\{x_m^L = x_k = z\},
\]
where the events appearing on the right-hand side are disjoint.

In order to explore independence it will be convenient to look at a subsequence of $A$ with indexes spaced by $L$ units.
Conditional on $\{x_m^L = x_k = z\}$, the events $\{x_{m+jL}^L \in A\}$, ${j \geq 1}$, are independent and occur with probability
\[
\kappa=\P\big(\text{$\langle -i,j \rangle$ is closed for every $i,j > 0$ such that $i+j \leq L$} \big)>0.
\]
Therefore, conditional on $\{x_m^L = x_k = z\}$ the random variable
\begin{equation*}
N = \min\{j \geq 1 \colon x_{m+jL}^L \in A\},
\end{equation*}
is independent of $\{\theta_i^L, i \leq m\}$ and is distributed as a geometric random variable with parameter $\kappa$.
Writing $m' = m+NL$ and $m^{\ast} = \min\{i > m \colon  x_i^L \in A\}$, then $m^{\ast} \leq m'$.
Therefore, defining $\zeta = x_{m'}^L - x_{m}^L$,
on the event $\{x_m^L = x_k = z\}$, we have
\[
\theta_k = x_{m^{\ast}}^L - x_{m}^L \leq x_{m'}^L - x_{m}^L = \zeta.
\] 
Therefore, $\P(\theta_k \geq n ~\vert~ x_m^L = x_k = z) \leq \P(\zeta \geq n ~\vert~ x_m^L = x_k = z)$.

We can write
\begin{equation*}
\zeta =  \sum_{i = m}^{m + LN - 1} \theta_{i}^L.
\end{equation*}

Now, if $\{W_i, ~i \geq 1\}$, are i.i.d.\ random variables with the same distribution as $\theta_i^L$, and $G$ is a geometric random variable with parameter $\kappa$ independent of all the rest, then
the conditional distribution of $\zeta$ given $\{x_m^L = x_k = z\}$ is the same as 
\[
W = \sum_{i=1}^{LG} W_i.
\]
Therefore,
\begin{equation*}
 \P\left(\theta_k \geq n ~\vert~ x_m^L = x_k = z\right) \leq \P\left(\zeta \geq n ~\vert~ x_m^L = x_k = z \right) = \P\left( W \geq n \right).
\end{equation*}
   
Therefore,
\begin{align*}
\P(\theta_k \geq n) &= \P\left(\theta_k \geq n ~\vert~  x_k = z\right) \\
&= \frac{1}{\P(x_k = z)}\sum_{m \geq k} \left[ \P\left(\theta_k \geq n ~\vert~ x_m^L = x_k = z\right)\P\left( x_m^L = x_k = z \right)\right] \\
&\leq \frac{\P(W \geq n)}{\P(x_k = z)}\sum_{m \geq k}\P\left( x_m^L = x_k = z \right) \\ 
&= \P(W \geq n).
\end{align*}

Now, we claim that $\E[W^{1+\delta}] < \infty$. 
In fact, Lemma~\ref{LemmaXiL}, guarantees that $\E[W_i^{1+\delta}] < \infty$ for every $i \geq 1$.
Writing $W = \sum_{\ell \geq 1} \sum_{i = 1}^{\ell L} W_i 1_{\{G=\ell\}}$, and recalling that $G$ has geometric distribution and is independent of the random variables $W_i$, $i \geq 1$, by Minkowski inequality we have for every positive integer $M$,
\begin{equation}\label{Minkowski}
\Big\lVert \sum_{\ell = 1}^M \sum_{i=1}^{\ell L} W_i 1_{{\{G=\ell}\}}\Big\rVert_{1+\delta} \leq \big\lVert W_1\big\rVert_{1+\delta} L \kappa^{1/(1+\delta)} \sum_{\ell=1}^{\infty} \ell  (1-\kappa)^{(\ell-1)/(1+\delta)} < \infty,
\end{equation}
The claim follows by taking the limit as $M$ goes to infinity.
This implies the desired moment bound $\E[\theta^{1+\delta}] \leq \E[W^{1+\delta}] < \infty$.
\end{proof}

\begin{proof}[Proof of Lemma~\ref{Lemmaeta}]
It is sufficient to consider $\eta_1$. By definition, $\eta_1 = v_2 - v_1 = \sum_{j=2i_1}^{2i_2 - 1} \theta_j$. Observe that $i_2 - i_1$ have geometric distribution with parameter $q = [\P(\theta=1)]^2$.

Consider the auxiliary random variable $\tilde{\theta}$ define as
\[\P(\tilde{\theta} = n) =  \P(\theta_1 + \theta_2 = n ~\vert~ \theta_1 + \theta_2 \neq 2), ~n \geq 3.\]
A straightforward consequence of Lemma~\ref{Lemmaxi} is that $\E[\tilde{\theta}^{1+\delta}] < \infty.$  

Let $\{\tilde{\theta}_j, ~j \geq 1\}$ be a sequence of i.i.d.\ copies of $\tilde{\theta}$ and $G$ a geometric random variable with parameter $q$ independent of this sequence. In this way, we have the following equality in distribution
\begin{equation*}%\label{etadistribution}
\begin{split}
\eta_1 &= \sum_{j=2i_1}^{2i_2 - 1} \theta_j = \theta_{i_1} + \theta_{2i_2 - 1} + \left[\sum_{j = i_1+1}^{i_2 - 1} (\theta_{2j-1} + \theta_{2j})\right]1_{\{i_2 \neq i_1 +1\}}~ \\ &\overset{D}{=} ~   2 + \left[\sum_{j = 1}^{G - 1}  \tilde{\theta}_j \right] 1_{\{G \neq 1\}}.
\end{split}
\end{equation*}

The conclusion of the proof follows arguing with a Minkowski inequality application as done in the paragraph of \eqref{Minkowski}. 
\begin{comment} 
Writing $W = \sum_{\ell \geq 1} \sum_{i = 1}^{\ell } \tilde{\theta}_i 1_{\{G=\ell+1\}}$, and recalling that $G$ has geometric distribution and is independent of the random variables $\{\tilde{\theta}_j, ~j \geq 1\}$, by Minkowski inequality we have for every positive integer $M$,
\begin{equation}\label{MinkowskiNova}
\Big\lVert \sum_{\ell = 1}^M \sum_{j=1}^{\ell } \tilde{\theta}_j 1_{{\{G=\ell+1}\}}\Big\rVert_{1+\delta} \leq \big\lVert \tilde{\theta}_1\big\rVert_{1+\delta} q^{1/(1+\delta)} \sum_{\ell=1}^{\infty} \ell  (1-q)^{(\ell)/(1+\delta)} < \infty.
\end{equation}
Taking the limit as $M$ goes to infinity,  \eqref{MinkowskiNova} yields that $ \E[W^{1+\delta}]   < \infty.$ Therefore, from \eqref{etadistribution} we conclude $\E[\eta_1^{1+\delta}] < \infty$.
\end{comment}
\end{proof}  

We now move to the proof of Lemma \ref{l:expectation_ek}.
As a first step, we have the following result about the expected number of open edges inside a deterministic interval. 
For each positive integer $M$, let
\[
N_M = \big\lvert \{\langle x,y \rangle  \colon \min\{x,y\} \in \llbracket 1,M \rrbracket, \langle x,y \rangle \text{ is open}\} \big\rvert,
\]
where the notation $[[a,b]]$ denotes the set $[a,b]\cap\Z$.

\begin{lemma}\label{LemmaCaixa}
Given $\alpha > 0$, for every $M$ sufficiently large,
\begin{equation*}
\P\big( N_M \geq 2M^{1+\alpha} \big) \leq \frac{1}{M^{1+(3\alpha/2)}}.
\end{equation*}
\end{lemma}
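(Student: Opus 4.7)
The plan is to observe that $N_M$ is a sum of independent Bernoulli random variables whose mean is of order $M$, and then obtain the polynomial tail bound by applying Markov's inequality to a suitably high integer moment.

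First, I would decompose
\[
N_M = \sum_{x=1}^{M} \sum_{d\geq 1} \ind_{\{\langle x,\,x+d\rangle\text{ is open}\}}.
\]
Each indicator on the right is a Bernoulli random variable of parameter $d^{-s}$, and they are jointly independent since the states of the edges of $G$ are independent. The hypothesis $s>2$ gives
\[
\mu \;:=\; \E[N_M] \;=\; M\sum_{d\geq 1} d^{-s} \;=\; M\,\zeta(s),
\]
which is finite and of order $M$.

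Next, for an integer $k\geq 1$ (to be chosen) I would bound the $k$-th moment by $\E[N_M^k]\leq C_k\,M^k$. This follows from the classical expansion
\[
\E[N_M^k] \;=\; \sum_{\ell=1}^{k} S(k,\ell)\,\ell!\, e_\ell(p_1,p_2,\ldots),
\]
where $p_1,p_2,\ldots$ list the probabilities $d^{-s}$ appearing in the decomposition above, $S(k,\ell)$ are Stirling numbers of the second kind, and $e_\ell$ is the elementary symmetric polynomial of degree $\ell$ (using $\ind^j=\ind$ for $\{0,1\}$-valued variables). Since $e_\ell\leq \mu^\ell/\ell!$ by Maclaurin's inequality, one obtains $\E[N_M^k]\leq C_k(1+\mu^k)\leq C_k'\,M^k$ for $M\geq 1$, with $C_k'$ depending only on $k$ and $s$.

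Finally, I would pick $k$ large enough that $k\alpha > 1+\tfrac{3\alpha}{2}$, which holds as soon as $k\geq \lceil\alpha^{-1}\rceil +2$. Markov's inequality then yields
\[
\P\bigl(N_M\geq 2M^{1+\alpha}\bigr) \;\leq\; \frac{\E[N_M^k]}{(2M^{1+\alpha})^k} \;\leq\; \frac{C_k'}{2^k}\,M^{-k\alpha} \;\leq\; \frac{1}{M^{1+3\alpha/2}}
\]
for all sufficiently large $M$. No step looks like a real obstacle, since all ingredients are standard; the only slightly technical point is the moment estimate $\E[N_M^k]\leq C_k M^k$, and even this could be bypassed by invoking a Chernoff bound for sums of independent Bernoullis, which would yield super-polynomial decay (much stronger than the polynomial rate we need here).
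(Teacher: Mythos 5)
Your proof is correct, but it takes a heavier route than the paper. You observe (as does the paper) that $N_M$ is a sum of independent Bernoulli variables with mean $\mu = M\,\E[N_1] = O(M)$, but then you bound the $k$-th moment $\E[N_M^k]\leq C_k' M^k$ via the Stirling-number expansion and apply Markov's inequality at a high enough power $k$ to beat $M^{-1-3\alpha/2}$. The paper instead stops at the second moment: since the summands are i.i.d.\ with variance at most $\E[N_1]$, one has $\operatorname{Var}[N_M]\leq M\,\E[N_1]$, and Chebyshev directly yields
\[
\P\bigl(N_M\geq 2M^{1+\alpha}\bigr)\leq \P\bigl((N_M-\E N_M)^2\geq M^{2+2\alpha}\bigr)\leq \frac{\E[N_1]}{M^{1+2\alpha}},
\]
valid once $M$ is large enough that $\E[N_M]<M^{1+\alpha}$; the stated bound then follows once $\E[N_1]<M^{\alpha/2}$. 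So the paper exploits the extra half-power of slack between the target exponent $1+3\alpha/2$ and the Chebyshev rate $1+2\alpha$, which makes $k=2$ suffice and avoids the combinatorial moment estimate. Your higher-moment (or Chernoff) approach would give a stronger, super-polynomial tail if one needed it, but for the stated inequality Chebyshev is the minimal tool and is what the authors use.
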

\begin{proof}
It is straightforward to verify that
$M \leq \E[N_M] = M \E[N_1]$ and $\mathbb{V}\textrm{ar}[N_M] = M \E[N_1]$.
Hence, for $M$ such that $\mathbb{E}[N_M] < M^{1+\alpha}$,
\begin{align*}
 \P(N_M \geq 2M^{1+\alpha})
&\leq \P\left((N_M - \E[N_M])^2 \geq M^{2+2\alpha}  \right) \\
&\leq \frac{\mathbb{V}\textrm{ar}[N_M]}{M^{2+2\alpha}} \leq \frac{\mathbb{E}[N_1]}{M^{1+2\alpha}}.
\end{align*}
Hence, the assertion holds as soon as $\mathbb{E}[N_1] < M^{\alpha/2}$.
\end{proof}

We are now ready to prove Lemma \ref{l:expectation_ek}.

\begin{proof}[Proof of Lemma \ref{l:expectation_ek}]
It suffices to consider even values of $k$. Without loss of generality, we also assume that $k$ is positive.
Let $\delta \in (0,s-2)$, and define $\varepsilon = (1+\delta)/(1+\alpha)-1$, where $\alpha = 5\delta/6$.

For every $n \geq 1$, we have
\begin{equation}
\label{e:tail_ek}
%\begin{split}
\P  \Big(  |\mathcal{E}_k| \geq n^{1/(1+\varepsilon)}\Big) %\\ & 
\leq \P\Big( 2|V_k| \geq n^{1/(1+\delta)}  \Big) + \P\Big(  |\mathcal{E}_k| \geq n^{1/(1+\varepsilon)}, 2|V_k| < n^{1/(1+\delta)}  \Big).
%\end{split}
\end{equation}

For a vertex $z \in \Z$ and a positive integer $M \geq 1$, let 
\[
N_M^z = \big\vert \big\{\langle x,y \rangle \in \mathcal{E}  \colon \min\{x,y\} \in \llbracket z+1,z+M \rrbracket \big\} \big\vert,
\]
be the number of open edges linking a vertex in $\llbracket z+1,z+M \rrbracket$ to another vertex lying to its right.
Conditioning on $\{ v_{k/2} = z\}$ and using  Lemma~\ref{LemmaCaixa} we get, for $M$ sufficiently large (depending on $\delta$).
\begin{equation}
\begin{split}
\P\left( |\mathcal{E}_k| \geq 2M^{1+\alpha}, |V_k| < M \mid v_{k/2} = z \right) 
& \leq \P \left( N_{M}^z \geq 2M^{1+\alpha} \mid v_{k/2} = z \right) \\
& \leq \frac{1}{M^{1+3 \alpha/2}}.
\end{split}
\end{equation}
Summing over $z \in \mathbb{Z}$ we obtain 
\[
\P\left(|\mathcal{E}_k| \geq 2M^{1+ \alpha}, |V_k| < M \right) \leq 1/M^{1+3\alpha/2}.
\]
Taking $M = n^{1/(1+\delta)}$
\begin{equation*}
\P\left(  |\mathcal{E}_k| \geq  2 n^{1/(1+\varepsilon)}, |V_k| < n^{1/(1+\delta)}  \right) \leq \frac{1}{n^{\beta}},
\end{equation*}
where $\beta = [1+(3\alpha/2)]/(1+\delta) > 1$.

Recalling \eqref{moment}, and summing \eqref{e:tail_ek} over all $n \geq 1$, we obtain
\begin{align*} 
\frac{1}{2^{1+\varepsilon}}\E[|\mathcal{E}_k|^{1+\varepsilon}]  &= \sum_{n \geq 1} \P\left( |\mathcal{E}_k| \geq 2 n^{1/(1+\varepsilon)}\right) \\ 
&\leq  \sum_{n \geq 1} \P\left( |V_k| \geq n^{1/(1+\delta)} \right) + \sum_{n \geq 1} \P\left(  |\mathcal{E}_k| \geq  2 n^{1/(1+\varepsilon)}, |V_k| < n^{1/(1+\delta)}  \right) \\
&\leq \E[|V_k|^{1+\delta}] + \sum_{n \geq 1} \frac{1}{n^{\beta}} < \infty.
\end{align*}
In conclusion, $\E[|\mathcal{E}_k|^{1+\varepsilon}] < \infty$ as desired.
\end{proof}

\section{The renormalization step}
\label{s:renormalization}

We now implement a one-step renormalization to couple the contact process on \( G = (\mathbb{Z}, \mathcal{E}) \) with a site percolation model on \( \mathbb{Z} \times \mathbb{Z}_+ \).  

We begin by partitioning \( \mathbb{R}_+ \) into intervals of length \( T = 1/{\sqrt{\lambda}} > 0 \).
Given \( \mathcal{E} \), a pair \( (k,j) \in \mathbb{Z} \times \mathbb{Z}_+ \) is said to be good if, during the \( j \)-th time interval
\begin{itemize}
\item there are no infection attempts along any edge \( e \in \mathcal{E}_k \):
\begin{equation*}
\textrm{$\mathcal{I}_e \cap [jT, (j+1)T) = \varnothing$,\, for every $e\in \mathcal{E}_k$},
\end{equation*}
\item for each vertex \( x \in V_k \), there is at least one recovery mark: 
\begin{equation*}
\textrm{$\mathcal{R}_x \cap [jT, (j+1)T) \neq \varnothing$, \, for every vertex $x \in V_k$}.
\end{equation*}
\end{itemize}
We define 
\[
p = (1-  e^{-T})e^{-1/T}
\]
whose value can be made arbitrarily close to $1$ by decreasing the infection rate $\lambda$ (hence increasing $T$).
Under the quenched probability measure governing the contact process on $G=(V,\mathcal{E})$, we have for every realization of $\mathcal{E}$,
\begin{equation}
\label{e:domination_good}
P^{\mathcal{E}}_{\lambda} \left( (k,j) \textrm{ is good}\right) = \left(1 - e^{-T}\right)^{|V_k|}e^{-\lambda T|\mathcal{E}_k|} \geq p^{|\mathcal{E}_k|},
\end{equation}
where we have used the fact that $|V_k| \leq |\mathcal{E}_k|$.

Let $\Phi = (\phi_k)_{k \in \Z}$ be a sequence of independent positive random variables that we call a random environment.
Conditional on $\Phi$, we denote by $P_{\Phi, p}$ the law of a site percolation process on $\mathbb{Z} \times \mathbb{Z}_+$ under which every site $(i,j)$ is open independently with probability $p^{\phi_i}$.
In what follows, we will be interested in the setting where $\phi_k = |\mathcal{E}_k|$ for every $k$, because by \eqref{e:domination_good} $P_{\Phi,p}$ is dominated stochastically by the law of the random process $(\mathbf{1}_{\text{$(k,j)$ is good}})_{(k,j)\in \mathbb{Z}\times \mathbb{Z}_+}$.

Since $(v_k)_{k \in \mathbb{Z} \setminus \{0\}}$ are cut-points  there are no open edges connecting $V_{k}$ to $V_{\ell}$ if $|k - \ell|>1$.
Consequently, if the infection enters a good box, it can only propagate to an adjacent box sharing a common face. 
However, this is impossible if the latter is also good.
This implies that the existence of a semi-circuit of good vertices around the origin in $\mathbb{Z} \times \mathbb{Z}_+$, ensuring that the contact process dies out.

\begin{center}
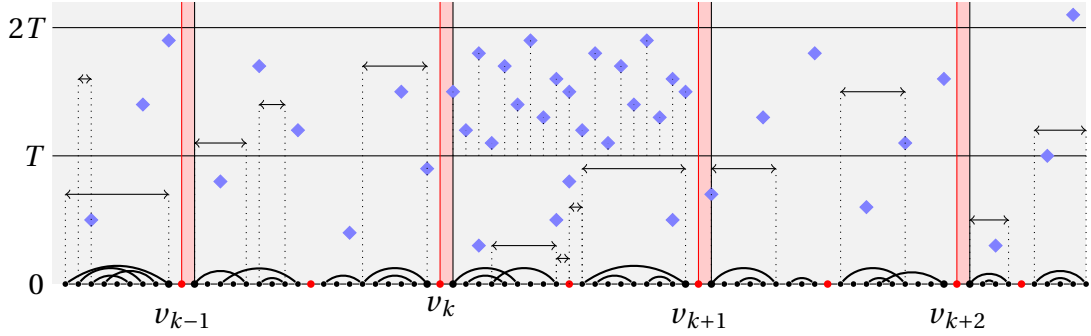
\begin{figure}[htb!]
\begin{tikzpicture}[scale=0.17, every node/.style={circle, fill, inner sep=.75pt}]
    % Número de sítios
    \def\n{80}
    
    % Colorir os retângulos com lado esquerdo vermelho (largura 1)
    \foreach \x in {10, 30, 50, 70} {
        \fill[red!20] (\x,0) rectangle (\x+1,10); % De y=0 a y=10
        \fill[red!20] (\x,10) rectangle (\x+1,20); % De y=10 a y=20
        \fill[red!20] (\x,20) rectangle (\x+1,22); % De y=20 a y=22
    }

    % Colorir os retângulos com lado direito preto
    \fill[gray!10] (0,0) rectangle (9,10);    % Até x=9 (antes do primeiro vermelho)
    \fill[gray!10] (0,10) rectangle (9,20);
    \fill[gray!10] (0,20) rectangle (9,22);
    \fill[gray!10] (9,0) rectangle (10,10);   % 9 a 10 (largura 1)
    \fill[gray!10] (9,10) rectangle (10,20);
    \fill[gray!10] (9,20) rectangle (10,22);
    \fill[gray!10] (11,0) rectangle (29,10);  % 11 a 29
    \fill[gray!10] (11,10) rectangle (29,20);
    \fill[gray!10] (11,20) rectangle (29,22);
    \fill[gray!10] (29,0) rectangle (30,10);  % 29 a 30 (largura 1)
    \fill[gray!10] (29,10) rectangle (30,20);
    \fill[gray!10] (29,20) rectangle (30,22);
    \fill[gray!10] (31,0) rectangle (49,10);  % 31 a 49
    \fill[gray!10] (31,10) rectangle (49,20);
    \fill[gray!10] (31,20) rectangle (49,22);
    \fill[gray!10] (49,0) rectangle (50,10);  % 49 a 50 (largura 1)
    \fill[gray!10] (49,10) rectangle (50,20);
    \fill[gray!10] (49,20) rectangle (50,22);
    \fill[gray!10] (51,0) rectangle (69,10);  % 51 a 69
    \fill[gray!10] (51,10) rectangle (69,20);
    \fill[gray!10] (51,20) rectangle (69,22);
    \fill[gray!10] (69,0) rectangle (70,10);  % 69 a 70 (largura 1)
    \fill[gray!10] (69,10) rectangle (70,20);
    \fill[gray!10] (69,20) rectangle (70,22);
    \fill[gray!10] (71,0) rectangle (80,10);  % 71 a 80
    \fill[gray!10] (71,10) rectangle (80,20);
    \fill[gray!10] (71,20) rectangle (80,22);

    % Desenhar os sítios
    \foreach \x in {1,...,\n} {
        \node (s\x) at (\x, 0) {};
    }
    \foreach \x in {1,...,79} {
        \draw (s\x) -- (s\the\numexpr\x+1);
    }

    % Definir ângulos ajustados
    \def\angleShort{40}  % Para conexões curtas (3 unidades)
    \def\angleMedium{50} % Para conexões médias (5 unidades)
    \def\angleLong{35}   % Para conexões longas (7+ unidades)

    % Desenhar conexões de longo alcance com ângulos ajustados
    \draw[thick] (s1) to[out=\angleLong, in=180-\angleLong] (s9);
    \draw[thick] (s2) to[out=\angleMedium, in=180-\angleMedium] (s7);
    \draw[thick] (s3) to[out=\angleShort, in=180-\angleShort] (s6);
    \draw[thick] (s4) to[out=\angleMedium, in=180-\angleMedium] (s8);
    \draw[thick] (s11) to[out=\angleMedium, in=180-\angleMedium] (s15);
    \draw[thick] (s13) to[out=\angleShort, in=180-\angleShort] (s19);
    \draw[thick] (s16) to[out=\angleShort, in=180-\angleShort] (s18);
    \draw[thick] (s21) to[out=\angleShort, in=180-\angleShort] (s24);
    \draw[thick] (s24) to[out=\angleMedium, in=180-\angleMedium] (s29);
    \draw[thick] (s25) to[out=\angleShort, in=180-\angleShort] (s28);
    \draw[thick] (s31) to[out=\angleMedium, in=180-\angleMedium] (s36);
    \draw[thick] (s32) to[out=\angleShort, in=180-\angleShort] (s35);
    \draw[thick] (s34) to[out=\angleMedium, in=180-\angleMedium] (s39);
    \draw[thick] (s41) to[out=\angleLong, in=180-\angleLong] (s49);
    \draw[thick] (s42) to[out=\angleShort, in=180-\angleShort] (s45);
    \draw[thick] (s46) to[out=\angleMedium, in=180-\angleMedium] (s48);
    \draw[thick] (s51) to[out=\angleMedium, in=180-\angleMedium] (s56);
    \draw[thick] (s53) to[out=\angleShort, in=180-\angleShort] (s55);
    \draw[thick] (s57) to[out=\angleShort, in=180-\angleShort] (s59);
    \draw[thick] (s61) to[out=\angleMedium, in=180-\angleMedium] (s66);
    \draw[thick] (s63) to[out=\angleShort, in=180-\angleShort] (s65);
    \draw[thick] (s64) to[out=\angleLong, in=180-\angleLong] (s69);
    \draw[thick] (s71) to[out=\angleMedium, in=180-\angleMedium] (s74);
    \draw[thick] (s72) to[out=\angleShort, in=180-\angleShort] (s73);
    \draw[thick] (s76) to[out=\angleMedium, in=180-\angleMedium] (s80);
    \draw[thick] (s77) to[out=\angleShort, in=180-\angleShort] (s79);

    % Destacar os cut-points
    \foreach \x in {10, 20, 30, 40, 50, 60, 70, 75} {
        \node[red] at (\x,0) [circle, fill=red, inner sep=1pt] {};
    }
    % Adicionar rótulos abaixo do 1º, 3º, 5º e 7º cut-points (10, 30, 50, 70)
    \node[below, draw=none, fill=none] at (10,0) {$v_{k-1}$};
    \node[below, draw=none, fill=none] at (30,0) {$v_{k}$};
    \node[below, draw=none, fill=none] at (50,0) {$v_{k+1}$};
    \node[below, draw=none, fill=none] at (70,0) {$v_{k+2}$};

    % Linhas horizontais
    \draw[thin] (0,10)--(80,10);
    \draw[thin] (0,20)--(80,20);

    % Linhas verticais e pontos nas interseções
    \foreach \x in {10, 30, 50, 70} {
        \draw[thin, red] (\x,0) -- (\x,22);
        \draw[thin] (\x+1,0) -- (\x+1,22);
        % \node[red] at (\x,10) [circle, fill=red, inner sep=1pt] {};
        % \node[red] at (\x,20) [circle, fill=red, inner sep=1pt] {};
        \node at (\x-1,0) [circle, fill=black, inner sep=1pt] {};
        \node at (\x+1,0) [circle, fill=black, inner sep=1pt] {};
        % \node at (\x+1,10) [circle, fill=black, inner sep=1pt] {};
        % \node at (\x+1,20) [circle, fill=black, inner sep=1pt] {};
    }

    \node[left, draw=none, fill=none] at (0,0) {$0$};
    \node[left, draw=none, fill=none] at (0,10) {$T$};
    \node[left, draw=none, fill=none] at (0,20) {$2T$};

    % Adicionar marcas de cura para h de 31 a 49 com t(h) entre 11 e 19
    \foreach \h/\th in {31/15, 32/12, 33/18, 34/11, 35/17, 36/14, 37/19, 38/13, 39/16,
                        40/15, 41/12, 42/18, 43/11, 44/17, 45/14, 46/19, 47/13, 48/16, 49/15} {
        \draw[blue!50, fill] (\h,\th-0.5) -- (\h+0.5,\th) -- (\h,\th+0.5) -- (\h-0.5,\th) -- cycle;
        \draw[dotted, thin] (\h,10)--(\h,\th);
    }
    % Adicionar marcas de cura para alguns h's de 31 a 49 com t(h) entre 1 e 9
    \foreach \h/\th in {33/3, 39/5, 40/8,
                          48/5} {
        \draw[blue!50, fill] (\h,\th-0.5) -- (\h+0.5,\th) -- (\h,\th+0.5) -- (\h-0.5,\th) -- cycle;
    }

    % Estrutura de caminhos a partir de (40,0) sem segmentos verticais
    \draw[thin, <->] (40,6) -- (41,6);
    \draw[thin, <->] (40,2) -- (39,2);
    \draw[thin, <->] (39,3) -- (34,3); 
    \draw[thin, <->] (41,9) -- (49,9);
    \draw[thin, dotted] (40,6) -- (40,0); 
    \draw[thin, dotted] (41,6) -- (41,0);
    \draw[thin, dotted](40,2) -- (40,0);
    \draw[thin, dotted](39,2)  -- (39,0);
    \draw[thin, dotted] (39,3) -- (39,0); 
    \draw[thin, dotted] (34,3) -- (34,0);
    \draw[thin, dotted] (41,9) --(41,0);
    \draw[thin, dotted](49,9) -- (49,0);

    % Adicionar marcas "x" esparsas nas outras regiões
    \foreach \h/\th in {3/5, 7/14, 9/19, 13/8, 16/17, 19/12, 23/4, 27/15, 29/9,
                        51/7, 55/13, 59/18, 63/6, 66/11, 69/16, 73/3, 77/10, 79/21} {
        \draw[blue!50, fill] (\h,\th-0.5) -- (\h+0.5,\th) -- (\h,\th+0.5) -- (\h-0.5,\th) -- cycle;
    }

    % Adicionar segmentos horizontais esparsos paralelos aos elos do grafo
    \draw[thin, <->] (1,7) -- (9,7);    % Paralelo ao elo de longo alcance 1-9
    \draw[thin, <->] (2,16) -- (3,16);  % Paralelo ao elo adjacente 2-3
    \draw[thin, <->] (11,11) -- (15,11); % Paralelo ao elo de longo alcance 11-15
    \draw[thin, <->] (16,14) -- (18,14); % Paralelo ao elo de longo alcance 16-18
    \draw[thin, <->] (24,17) -- (29,17); % Paralelo ao elo de longo alcance 24-29
    \draw[thin, <->] (51,9) -- (56,9);  % Paralelo ao elo de longo alcance 51-56
    \draw[thin, <->] (61,15) -- (66,15); % Paralelo ao elo de longo alcance 61-66
    \draw[thin, <->] (71,5) -- (74,5);  % Paralelo ao elo de longo alcance 71-74
    \draw[thin, <->] (76,12) -- (80,12); % Paralelo ao elo de longo alcance 76-80

     % Adicionar linhas pontilhadas ligando os segmentos horizontais esparsos à projeção em t=0.
    \draw[thin, dotted] (1,7) -- (1,0) (9,7) -- (9,0);    % Paralelo ao elo de longo alcance 1-9
    \draw[thin, dotted] (2,16) -- (2,0) (3,16) -- (3,0);  % Paralelo ao elo adjacente 2-3
    \draw[thin, dotted] (11,11) -- (11,0) (15,11) -- (15,0); % Paralelo ao elo de longo alcance 11-15
    \draw[thin, dotted] (16,14) -- (16,0) (18,14) -- (18,0); % Paralelo ao elo de longo alcance 16-18
    \draw[thin, dotted] (24,17) -- (24,0) (29,17) -- (29,0); % Paralelo ao elo de longo alcance 24-29
    \draw[thin, dotted] (51,9) -- (51,0) (56,9) -- (56,0);  % Paralelo ao elo de longo alcance 51-56
    \draw[thin, dotted] (61,15) -- (61,0) (66,15) -- (66,0); % Paralelo ao elo de longo alcance 61-66
    \draw[thin, dotted] (71,5) -- (71,0) (74,5) -- (74,0);  % Paralelo ao elo de longo alcance 71-74
    \draw[thin, dotted] (76,12) -- (76,0) (80,12) -- (80,0); % Paralelo ao elo de longo alcance 76-80
\end{tikzpicture}
\caption{The renormalization scheme. 
We represent the event $\{(2k,1) \text{ is good}\}$.
In the corresponding rectangle $\llbracket v_{k}+1, v_{k+1}-1 \rrbracket \times [T,2T]$ every site has a recovery mark (lozenge) and no edge has an infection attempt (double arrow).}
\label{f:renormalization_boxes}
\end{figure}
\end{center}

\section{Crossing events}
\label{sec:crossing}

In this section, we relate the crossing probabilities of  rectangles under $P_{\Phi,p}$ and under the bond percolation measure $\mathbb{P}^{\Lambda}_\rho$ defined on \eqref{e:def_perc_stret}, for suitable choices of $\Lambda$ and $\rho$ depending on $\Phi$ and $p$.

We will fix $\Phi = (\phi_k)_{k\in\mathbb{Z}}$ where $\phi_k = |\mathcal{E}_k|$ for every $k$.

A subset $R = \llbracket a,b \rrbracket \times \llbracket c,d \rrbracket \subset \Z \times \Z_+$ where $ a < b $ and $0 \leq c < d $ are integer numbers is called a rectangle.
We will be interested in the occurrence of horizontal and vertical crossings spanning $R$:
\begin{align*}
\mathcal{C}_{h}(R) &= \big\{ \{a\} \times \llbracket c,d \rrbracket \overset{R}\longleftrightarrow  \{b\} \times \llbracket c,d \rrbracket \big\}, \\ 
\mathcal{C}_{v}(R) &= \big\{  \llbracket a,b \rrbracket \times \{c\} \overset{R}\longleftrightarrow   \llbracket a,b \rrbracket \times \{d\} \big\}
\end{align*}
where $A \overset{S}\longleftrightarrow B$ stands for the existence of a path of open vertices connecting $A$ to $B$ while remaining inside $S$.
We abuse notation and still write $\mathcal{C}_{h}(R)$ and $\mathcal{C}_{v}(R)$ for crossings of open edges in the rectangle $R$.

The next result compares the crossing probabilities of a rectangle $R$ and a rescaled version of it under the site percolation measure $P_{\Phi,p}$ introduced in the previous section and the bond percolation measure $\mathbb{P}^{\Lambda}_\rho$, respectively. 
The content of the following lemma is based on a standard coupling of site and bond percolation and is reminiscent of \cite[Theorem 1.33]{Grimm}.

\begin{lemma}
\label{l:compare_site_bond}
Let $\Lambda = \{\ldots, \phi_{-2}+1, \phi_0+1, \phi_2+1, \phi_4+1, \ldots \}$ and $\rho = 1 - (1-p)^{-1/4}$.
Then
\begin{eqnarray}
P_{\Phi,p}\big(\mathcal{C}_{h}( \llbracket 2a+1,2b+1 \rrbracket \times\llbracket c,d \rrbracket )\big)  
& \geq & \P_{\rho}^{\Lambda}\big(\mathcal{C}_{h}(\llbracket a,b \rrbracket \times\llbracket c,d \rrbracket)\big), \\
P_{\Phi,p}\big(\mathcal{C}_{v}( \llbracket 2a+1,2b+1 \rrbracket \times\llbracket c,d \rrbracket )\big), 
& \geq & \P_{\rho}^{\Lambda}\big(\mathcal{C}_{v}(\llbracket a,b \rrbracket \times\llbracket c,d \rrbracket)\big).
\end{eqnarray}

\end{lemma}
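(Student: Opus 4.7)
The plan is to follow a coupling argument in the spirit of \cite[Theorem 1.33]{Grimm}, exploiting the algebraic identity $(1-\rho)^4 = 1-p$ that follows from (correcting the obvious sign typo) $\rho = 1-(1-p)^{1/4}$. This identity lets one decompose the openness of an odd-column site (probability $p$) as the OR of four independent Bernoulli$(\rho)$ events, one for each of the four cardinal directions of the corresponding bond-lattice site. Since every odd column of the wide rectangle corresponds to a strong cut-point column with $\phi_{2i+1}=1$, this decomposition can be neatly carried out column by column.

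I would build the coupling on a common probability space as follows. For each odd-column site $(2i+1,j)$ of the wide rectangle, introduce four independent Bernoulli$(\rho)$ variables $\beta^{\downarrow}_{(2i+1,j)}, \beta^{\uparrow}_{(2i+1,j)}, \beta^{\leftarrow}_{(2i+1,j)}, \beta^{\rightarrow}_{(2i+1,j)}$; for each even-column site $(2i+2,j)$, introduce a single uniform $U_{(2i+2,j)} \sim \mathrm{Unif}[0,1]$. Declare the site percolation $X$ by $X(2i+1,j)=1$ iff at least one of its $\beta$'s equals $1$ (giving marginal $1-(1-\rho)^4 = p = p^{\phi_{2i+1}}$) and $X(2i+2,j)=1$ iff $U \leq p^{\phi_{2i+2}}$ (giving marginal $p^{\phi_{2i+2}}$). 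Since distinct sites use disjoint random variables, the sites are independent and $X$ has the law $P_{\Phi,p}$. Then, for the bond percolation $Y$, couple each vertical bond $\{(i,j-1),(i,j)\}$ to the pair $\beta^{\uparrow}_{(2i+1,j-1)}, \beta^{\downarrow}_{(2i+1,j)}$ (with the opening event chosen so that it forces both Bernoullis to equal $1$, hence both endpoint odd-column sites to be open, while delivering the correct marginal $\rho$), and couple each horizontal bond $\{(i,j),(i+1,j)\}$ to $U_{(2i+2,j)}$ together with $\beta^{\rightarrow}_{(2i+1,j)}$ and $\beta^{\leftarrow}_{(2i+3,j)}$ (with marginal $\rho^{\xi_{i+1}} = \rho^{\phi_{2i+2}+1}$, valid because $\rho^{\phi_{2i+2}+1} \leq p^{\phi_{2i+2}}$). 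Because distinct bonds involve disjoint pieces of randomness (different directions at the odd-column sites, different even-column sites), the bonds are independent, so $Y$ has the law $\mathbb{P}^{\Lambda}_\rho$. Checking the crossing inequality is then a direct verification: an open bond path in $Y$ across the narrow rectangle automatically induces an open site path across the wide rectangle, since each vertical step activates both endpoint odd-column sites via dedicated Bernoullis, and each horizontal step activates the intermediate even-column site together with the horizontal Bernoullis at the two odd-column endpoints. The vertical crossing inequality is proved identically by the symmetric construction.

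The main technical obstacle is the simultaneous satisfaction of three constraints on the coupling: (i) independence of both the site and the bond configurations; (ii) the correct marginals $p^{\phi_i}$ for the sites and $\rho^{\xi_{i+1}}$ for the bonds; and (iii) the deterministic implication that each open bond forces the openness of \emph{both} its endpoint sites in $X$. The four-fold redundancy at each odd-column site furnished by $(1-\rho)^4 = 1-p$ is exactly the budget needed to dedicate one independent Bernoulli per incident bond direction, so that every bond can be coupled to disjoint Bernoullis (preserving independence) while still providing the correct marginal at the right column and the desired implication for the openness of both endpoints. A minor additional argument is needed at the left and right boundary columns $\{2a+1\}$ and $\{2b+1\}$ of the wide rectangle, where the crossing event only demands some starting and some ending site in those columns to be open; the coupling handles this automatically since the first and last bonds of any bond crossing already force the corresponding odd-column boundary sites to be open in $X$.
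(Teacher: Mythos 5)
You correctly spot that the statement's $\rho=1-(1-p)^{-1/4}$ must read $\rho=1-(1-p)^{1/4}$, and you start from the same algebraic splitting $1-p=(1-\rho)^4$ that the paper uses. However, the static monotone coupling you propose cannot exist. For a vertical bond you need an open-bond event of probability $\rho$ contained in $\{\beta^{\uparrow}_{(2i+1,j-1)}=1\}\cap\{\beta^{\downarrow}_{(2i+1,j)}=1\}$, where the two Bernoulli$(\rho)$ variables live at \emph{different} sites. For $X$ to have the product law $P_{\Phi,p}$ those two sites must be independent, which forces the two Bernoullis to be independent, so the intersection has probability $\rho^2<\rho$ and the required containment is impossible; correlating them to inflate the joint probability would correlate $X(2i+1,j-1)$ with $X(2i+1,j)$ and destroy the law of $X$. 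The horizontal bonds fail too: since an open bond must force $U\le p^{\phi_{2i+2}}$ \emph{and} both $\beta^{\rightarrow}=\beta^{\leftarrow}=1$, the feasibility condition is $\rho^{\phi_{2i+2}+1}\le\rho^2\,p^{\phi_{2i+2}}$, i.e.\ $\rho^{\phi_{2i+2}-1}\le p^{\phi_{2i+2}}$, not the weaker $\rho^{\phi_{2i+2}+1}\le p^{\phi_{2i+2}}$ you checked, and the correct one fails at $\phi_{2i+2}=1$, a value $|\mathcal E_{2k}|$ attains with positive probability. Your ``budget'' heuristic is off by a factor of two: each bond must be backed by sub-Bernoullis at both of its endpoints, so four per site is not enough to certify both ends of every incident bond.

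The paper avoids this by using an exploration rather than a static coupling. When the exploration crosses an edge $\langle x,y\rangle$ with $x\in A_n$ and $y\notin A_n$, it reveals only the sub-site of the \emph{destination} $y$ labelled by the direction of approach (plus the intermediate even-column site for horizontal steps); the source $x$ is already in the cluster and is never re-certified. Each crossing therefore costs $\rho$ or $\rho\,p^{\phi_{2i+2}}\ge\rho^{\phi_{2i+2}+1}$, not $\rho^2$ or $\rho^2 p^{\phi_{2i+2}}$, and because distinct steps reveal disjoint randomness the step indicators are independent, so $A_\infty$ has the law of an exploration cluster in a bond model that dominates $\P^{\Lambda}_{\rho}$. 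Your approach could be salvaged by accepting the weaker conclusion with $\rho^2$ in place of $\rho$ (one checks $\rho^2\le p$, hence $(\rho^2)^{\phi+1}\le\rho^2 p^{\phi}$, for $p$ close to $1$), which would still be enough for Lemma \ref{l:Fk}, but it is not the inequality the lemma states.
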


\begin{proof}
Let $R = \llbracket 2a+1,2b+1 \rrbracket \times\llbracket c,d \rrbracket $.
Write $e_1 = \langle (0,0), (2,0)\rangle $ and $e_2 = \langle (0,0), (0,1) \rangle$ and define the following set of unoriented edges
\begin{equation*}
e(R) = \big\{ \big\langle (2i+1,j), (2i+1,j) + e_\ell \big\rangle \colon i \in \llbracket a, b\rrbracket, j \in \llbracket c,d \rrbracket, \ell = 1,2 \big\}.
\end{equation*}

Now let $\rho$ be such that $1-p = (1-\rho)^4$ and split each site of the form $(2i+1,j) \in R$  into four sites $(2i+1,j)_1, \dots, (2i+1,j)_4$.
Declare each site $(2i+1,j)_{\ell}$, $\ell = 1, \dots, 4$, open with probability $\rho$, independently.
We can recover the percolation process $P_{\Phi,p}$ inside $R$ by saying that $(2i+1,j)$ is open if, and only if, at least one of its four sites are open.

We now use an exploration process to compare $P_{\Phi,p}(\mathcal{C}_{h}(R))$ with the probability of the same event under the law of the bond percolation on a randomly stretched lattice \eqref{e:def_perc_stret}.

Fix any ordering of the edges in $e(R)$.
Start with 
\begin{equation*}
(A_0,B_0) = \big( \{2a+1\} \times \llbracket c,d \rrbracket, \varnothing \big).
\end{equation*}
Given $(A_n,B_n), n \geq 0$, let
\begin{equation*}
E_n = \{ e \in B_n^c \cap e(R) \colon e = \langle x,y \rangle \textrm{ for some } x \in A_n \textrm{ and } y \notin A_n\}.
\end{equation*}
Inductively, we define  $(A_{n+1},B_{n+1}) = (A_n,B_n)$ if $E_n$ is empty; otherwise, let $e_n = \langle x,y \rangle$ be the earliest edge in $E_n$ with $x = (2i+1, j) \in A_n$.
Set $B_{n+1} = B_n \cup \{e_n\}$, and define $A_{n+1}$ as follows:
\begin{itemize}
\item If $y = x+(2,0)$, set 
\begin{equation}
\label{e:add_y_1}
A_{n+1} =
\begin{cases}
 A_n \cup \{y\}, \textrm{ if $(2i+2, j)$ and $(2i+3,j)_1$ are open,} \\
A_n, \textrm{ otherwise.}
\end{cases}
\end{equation}
\item If $y = x-(2,0)$, set 
\begin{equation}
\label{e:add_y_2}
A_{n+1} =
\begin{cases}
 A_n \cup \{y\}, \textrm{ if $(2i, j)$ and $(2i-1,j)_2$ are open,} \\
A_n, \textrm{ otherwise.}
\end{cases}
\end{equation}
\item If $y = x+(0,1)$, set
\begin{equation}
\label{e:add_y_3}
A_{n+1} =
\begin{cases}
 A_n \cup \{y\}, \textrm{ if $(2i+1, j+1)_3$ is open;,} \\
A_n, \textrm{ otherwise.}
\end{cases}
\end{equation}
\item If $y = x-(0,1)$, set
\begin{equation}
\label{e:add_y_4}
A_{n+1} =
\begin{cases}
 A_n \cup \{y\}, \textrm{ if $(2i+1, j-1)_4$ is open,} \\
A_n, \textrm{ otherwise.}
\end{cases}
\end{equation}
\end{itemize}
At each step of the exploration, the probability of adding vertex $y$ equals either $\rho p^{\phi_{2i}}$, $\rho p^{\phi_{2(i-1)}}$ or $\rho$ depending on whether we are respectively in \eqref{e:add_y_1}, \eqref{e:add_y_2} or in  \eqref{e:add_y_3}, \eqref{e:add_y_4}.

Let $(A_{\infty}, B_{\infty}) = \cup_{n\geq 0}(A_n, B_n)$.
Notice that $A_\infty$ equals the exploration cluster of $A_0$ obtained in the bond percolation model on $e(R)$ where vertical edges are open with probability $\rho$ and horizontal edges $\langle (2i+1,j),(2i+1,j)+(2,0) \rangle$ are open with probability $\rho p^{\phi_{2i}} \geq \rho^{\phi_{2i} + 1}$, $a \leq i < b$.

Therefore,
\begin{align*}
P_{\Phi,p}\big(\mathcal{C}_{h}(R)\big) & 
\geq 
\P\left( A_{\infty}\cap \big(  \{2b+1\} \times \llbracket c,d \rrbracket \big) \neq \varnothing \right) \\ 
% & = \P\left( A_{\infty} \textrm{ has an end-vertex in $\{2b-1\} \times \llbracket c,d \rrbracket$} \right)\\ 
& = \P_{\rho}^{\Lambda}\big(\mathcal{C}_{h}(\llbracket a,b \rrbracket \times\llbracket c,d \rrbracket)\big),
\end{align*}
where, we recall, $\Lambda = \{\ldots, \phi_{-2}+1, \phi_0+1, \phi_2+1, \phi_4+1, \ldots \}$ and $\mathbb{P}^{\Lambda}_{\rho}$ is the quenched law governing the percolation process defined in \eqref{e:def_perc_stret}.
In the right-hand side we abuse notation and write $\mathcal{C}_h \big( (\llbracket a,b \rrbracket \times\llbracket c,d \rrbracket) \big)$ for the event that the respective rectangle is crossed horizontally by a path of  open bonds.
The rectangle $\llbracket a,b \rrbracket \times\llbracket c,d \rrbracket$ appears instead of $R$, because we are using edges of length $2$ in our exploration process.

Similarly
\begin{equation*}
P_{\Phi,p}\big(\mathcal{C}_{v}(R)\big) \geq \P_{\rho}^{\Lambda}\big(\mathcal{C}_{v}(\llbracket a,b \rrbracket \times\llbracket c,d \rrbracket)\big).
\end{equation*}
\end{proof}

In next section, we will define specific classes of rectangles such that horizontal and vertical crossing events almost surely occur provided that $p$, and consequently $\rho$, are sufficiently close to $1$.

\section{Percolation on stretched lattices}
\label{sec:stretched}

Let $\xi$ be a positive random variable taking integer values and $\{\xi_k\}_{k\in\mathbb{Z}}$ a sequence consisting of i.i.d.\ copies of $\xi$.
Let $x_0 =0$ and define recursively, 
\begin{equation}
\label{e:def_renewal}
\begin{cases}
x_{k} = x_{k-1}+\xi_{k}, ~ k \in \mathbb{Z}_{+},\\
x_{k} = x_{k+1}-\xi_{k+1}, ~ k \in \mathbb{Z}_{-},
\end{cases}
\end{equation}
and denote
\begin{equation}
\label{e:def_lambda}
\Lambda:= \{\ldots, x_{-2}, x_{-1}, x_0, x_1, x_2,\ldots\} \subseteq \mathbb{Z}.
\end{equation}
Hence $\Lambda$ is a two-sided renewal process with  interarrival distribution $\xi$.
We denote $\upsilon^{0}_\xi(\cdot)$ the law of $\Lambda$.

Conditional on \(\Lambda\), define the graph \(\mathcal{L}_\Lambda = (V(\mathcal{L}_\Lambda), E(\mathcal{L}_\Lambda))\) with vertex set and edge set given respectively by  
\begin{eqnarray}
V(\mathcal{L}_\Lambda) &:=& \Lambda \times \mathbb{Z}_+  \nonumber\\
E(\mathcal{L}_\Lambda) &:=& \big\{\langle(x_i, n), (x_j, m)\rangle \colon |i - j| + |n - m| = 1 \big\}.\nonumber
\end{eqnarray}  
Informally, \(\mathcal{L}_\Lambda\) is a deformation of \(\mathbb{Z} \times \mathbb{Z}_+\) where horizontal edges are stretched so that all edges between the \(k\)-th and \((k+1)\)-th column have random length \(\xi_k=x_{k} - x_{k-1}\).  
See Figure \ref{fig3_1}.

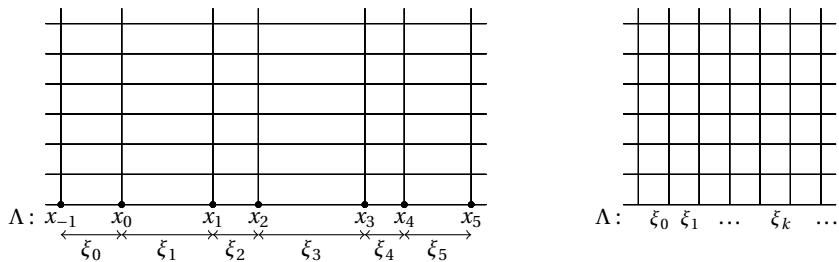
\begin{figure}[htb!]
	\centering
	\begin{tikzpicture}[scale=0.4, every node/.style={scale=0.75}]
	\foreach \y in {0,1,2,3,4,5,6} {
		\foreach\x in {0,2,5,6.5,10,11.3,13.5}	{
			\draw[ ] (\x,0) to (\x, 6.5);
			\draw[ ] (-.5,\y) to (14, \y);
			\fill (\x,0) circle(3pt);
	}}
	\def\numbers{{0,2,5,6.5,10,11.3,13.5}}
	   \foreach \i in {0,...,5}{
	    \pgfmathsetmacro{\n}{\numbers[\i]}
	     \pgfmathsetmacro{\m}{\numbers[\i+1]}
	     \draw[<->] (\n,-1) -- (\m,-1);
	     \node at (\n*.5+\m*.5,-1.5) {$\xi_{\i}$};

	     }
	\node at (0,-0.5) {$x_{-1}$};
	\node at (2,-0.5) {$x_0$};
	\node at (5,-0.5) {$x_1$};
	\node at (6.5,-0.5) {$x_2$};
	\node at (10,-0.5) {$x_3$};
	\node at (11.3,-0.5) {$x_4$};
	\node at (13.5,-0.5) {$x_5$};
	\node at (-1.3,-0.4) {$\Lambda:$};

	\foreach \y in {0,1,2,3,4,5,6} {
		\foreach\x in {0,1,2,3,4,5,6}	{
			\draw[ ] (\x+19,0) to (\x+19, 6.5);
			\draw[ ] (-.5+19,\y) to (6.5+19, \y);
	}}
	\node at (19.7,-0.5) {\small{${\xi_0}$}};
	\node at (20.7,-0.5) {\small{${\xi_1}$}};
	\node at (22,-0.6) {$\ldots$};
	\node at (23.7,-0.5) {\small{${\xi_k}$}};
	\node at (25.2,-0.6) {$\ldots$};
	\node at (18,-0.4) {$\Lambda:$};
	\end{tikzpicture}
	\caption{The lattice $\mathcal{L}_\Lambda$ (on the left) and $\mathbb{Z}\times \mathbb{Z}_+$ (on the right).
	The environment $\Lambda$ can be specified either by the $x_k$'s (left) or by the $\xi_k=x_{k}-x_{k-1}$ (right).
}\label{fig3_1}
\end{figure}

For a given realization of $\Lambda$ and a parameter $p \in [0,1]$, we consider the bond percolation process on $\mathcal{L}_\Lambda$ in which each edge is open independently with probability
\begin{equation}
\label{eq:_p_e}
p_e = p^{\abs{e}},
\end{equation}
where $\abs{e} = \|v_1 - v_2\|$ is the length of the edge $e=\{v_1,v_2\}$.
The point in defining that percolation on the stretched lattice is that it is equivalent to the bond percolation model in $\mathbb{Z}\times\mathbb{Z}_+$ given by \eqref{e:def_perc_stret}.
For this reason we will abuse notation and denote $\mathbb{P}^{\Lambda}_p$ the law of both processes.

We assume that $\xi$ is an integer-valued random variable and that 
\begin{itemize}
\item For $\varepsilon$ given at Lemma \ref{l:expectation_ek}
\begin{equation}
\label{e:xi_na_um_mais_epsilion}
\mathbb{E}(\xi^{1+\varepsilon}) < \infty.
\end{equation}

\item 
 $\xi$ is {\it aperiodic}, that is,
\begin{equation}
\label{e:xi_aperiodic}
\gcd\big\{k\in\mathbb{Z}_+ \setminus \{0\} \colon \mathbb{P}(\xi=k)>0\big\}=1.
\end{equation}
\end{itemize}

\remark{We are mainly interested in the case where $\xi_k = \phi_{2k}+1$, where $\phi_{2k} = |\mathcal{E}_{2k}|$, because
by Lemma \ref{l:compare_site_bond}, in order to bound below crossing probabilities of rectangles under $P_{\Phi,p}$ one may control the equivalent crossing probabilities for $\mathbb{P}^{\Lambda}_\rho$. 
A minor issue is that this sequence is not i.i.d.\ because the law of $|\mathcal{E}_0|$ is different from the others. 
}

\remark{By the independence of the states of each edge in the long-range percolation graph $G= (V, \mathcal{E})$ the $|\mathcal{E}_{2k}|$ may assume any large enough integer value, so the aperiodicity assumption will not pose any serious restriction for our arguments.}

Let us now fix two sequences $L_0, L_1, L_2, \ldots$ and $H_0, H_1, H_2, \ldots$ that will represent the horizontal length and vertical height scales along which we will analyze the model.
They are defined recursively by fixing integers $L_0 > 1$, $H_0 >1$ and constants $\gamma >1$ and $\mu \in (0,1)$ and then setting for every $k\geq1$,
\begin{eqnarray}
\label{Lk}  
L_k=L_{k-1}\lfloor L_{k-1}^{\gamma-1}\rfloor, \qquad \textrm{ and } \qquad  H_k=2\lceil \exp(L_k^\mu)\rceil H_{k-1}.
\end{eqnarray}

\begin{remark} 
For technical reasons, the initial scale $L_0$, has to be chosen sufficiently large, depending on $\varepsilon$.
The reader is referred to \cite[Conditions (i), (ii) and (iii), pp.\ 3153]{Hilario2023} for the exact requirements on $L_0$.
In addition, the exponent $\gamma$ cannot be too large while $\mu$ has to be taken sufficiently close to one.
The following choices
\begin{equation}
\label{e:def_gamma_mu}
 \gamma = 1+\frac{\varepsilon}{2(\varepsilon +4)} \,\,\,\,\,\,\,\,\,\,\,\,\,\,\,\, \textrm{ and }\,\,\,\,\,\,\,\,\,\,\,\,\,\,\,\,\, \mu = \frac{1}{2} \Big(1+\frac{1}{\gamma}\Big).
\end{equation}
will satisfy the requirements
(see \cite[Eq.\ (19) pp.\ 3153 and Eq.(32) pp.\ 3156]{Hilario2023}).
\end{remark}

For $k\in\mathbb{Z}_+$, let us partition $\mathbb{R}_+$ into intervals of length $L_k$
\[
I_j^k:=\big[j L_k, (j+1)L_k\big), \textrm{ with } j\in\mathbb{Z}.
\]
We will be interested in studying the occurrence of horizontal and vertical crossings of rectangles with length of order $L_k$ and height of order $H_k$.

For $i, j, k\in\mathbb{Z}_+$ denote
\begin{eqnarray}
\label{defC}
C_{i, j}^k:=\mathcal{C}_h\Big(\big(I^k_i\cup I^k_{i+1}\big)\times\big[j H_k, (j+1)H_k\big)\Big)
\end{eqnarray}
\begin{eqnarray}
\label{defD}
D_{i, j}^k:=\mathcal{C}_v\Big(I^k_i\times\big[j H_k, (j+2)H_k\big)\Big).
\end{eqnarray}
These events are illustrated in Figure \ref{fig:CeD}.

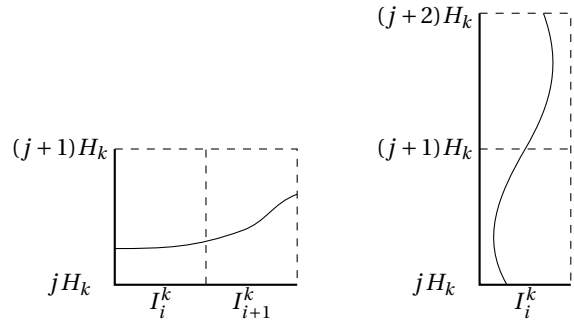
\begin{figure}[htb]
\centering
\begin{tikzpicture}[scale=0.6, every node/.style={scale=0.8}]
  % Primeiro retângulo (cruzamento horizontal)
  \begin{scope}
    % Retângulo
    \draw[thick] (0,3) -- (0,0) -- (4,0);
    \draw[dashed] (0,3) -- (4,3) -- (4,0);
    \draw[dashed] (2,0) -- (2,3);
    % Nomes
    \node at (1,-0.4) {$I_i^{k}$};
    \node at (3,-0.4) {$I_{i+1}^{k}$};
    \node at (-1,0) {$j H_k$};
    \node at (-1.2,3) {$(j+1)H_k$};
    % Cruzamento horizontal
    \draw (0,0.8) to [out=0,in=-160] (2.8,1.2) to [out=20,in=200] (4,2);
  \end{scope}

  % Segundo retângulo (cruzamento vertical), deslocado à direita
  \begin{scope}[xshift=8cm]
    % Retângulo
    \draw[thick] (0,6) -- (0,0) -- (2,0);
    \draw[dashed] (0,6) -- (2,6) --  (2,0);
    \draw[dashed] (0,3) -- (2,3);
    % Nomes
    \node at (1,-0.4) {$I_i^{k}$};
    \node at (-1,0) {$j H_k$};
    \node at (-1.2,3) {$(j+1)H_k$};
    \node at (-1.2,5.9) {$(j+2)H_k$};
    % Cruzamento vertical
    \draw (0.6,0) to [out=120,in=-120] (1,3) to [out=60,in=290] (1.4,6);
  \end{scope}
\end{tikzpicture}
\caption{Illustration of the events $C_{i, j}^k$ (left) and $D_{i, j}^k$ (right).}
\label{fig:CeD}
\end{figure}

For $k \geq 1$, each  $I_j^k$ can be partitioned into disjoint intervals from scale $k-1$:
\begin{equation}\label{firstandlast} I_j^k=\bigcup_{i \in l_{k,j}} I_i^{k-1},
\end{equation}
where the union runs over the set of indices
\begin{equation}
 l_{k,j} = \big\{ j\lfloor L_{k-1}^{\gamma-1}\rfloor, \cdots, (j+1)\lfloor L_{k-1}^{\gamma-1}\rfloor-1 \big\}.
 \end{equation}

Now fix an environment $\Lambda\subseteq \mathbb{Z}$.
Intervals will be recursively classified either good or bad as follows.

\begin{itemize}
\item at the bottom scale $k=0$, declare an interval bad if it does not intersect $\Lambda$ (see Figure~\ref{fig:bottom});
\item at higher scales $k \geq 1$, declare an interval bad if it contains two non-consecutive bad intervals from scale $k-1$ (see Figure~\ref{fig:GBintervals}). 
\end{itemize}

\begin{figure}
\centering
\begin{tikzpicture}[scale=0.4, every node/.style={scale=0.7}]
  % Primeira reta (y=0)
  \begin{scope}
    \draw[thin] (-1,0)--(20.7,0); % Linha base fina em preto
    \node[below] at (1,-0.1) {Good};
    \node[below] at (5,-0.1) {Bad};
    \node[below] at (9,-0.1) {Good};
    \node[below] at (13,-0.1) {Good};
    \node[below] at (17,-0.1) {Good};
    \node[above] at (-1,0.3) {$0$};
    \node[above] at (19,0.3) {$L_{1}$};
    \foreach \y in {0,...,5} {\draw[thick] (4*\y-1,0.3) -- (4*\y-1,-0.3);}
    \foreach \z in {2,...,4} {\node[above] at (4*\z-1,0.3) {$\z L_{0}$};}
    \node[above] at (3,0.2) {$L_0$};
    \draw[thin, ->] (20.4,0) -- (20.7,0); % Seta fina em preto no final
    % Discos pretos nas posições com "1"
    \foreach \x in {0,2,7,10,11,15,16,17,19} {
      \node[circle, fill=black, inner sep=2pt] at (\x,0) {};
    }
    % Discos com borda preta e interior branco nas posições com "0"
    \foreach \x in {-1,1,3,4,5,6,8,9,12,13,14,18,20} {
      \node[circle, draw=black, fill=white, inner sep=2pt] at (\x,0) {};
    }
  \end{scope}
\end{tikzpicture}
\caption{Bottom scale. 
Sites are colored black or white according to whether they belong to $\Lambda$ or not, respectively.
Good intervals contain at least one point of $\Lambda$.}
\label{fig:bottom}
\end{figure}
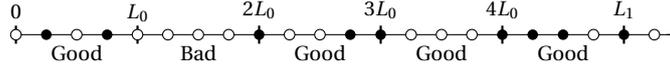

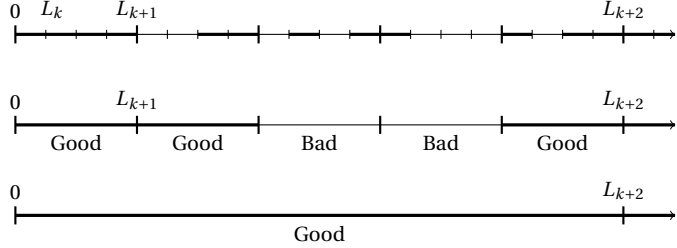
\begin{figure}
\centering
\begin{tikzpicture}[scale=0.4, every node/.style={scale=0.7}]
  % Segunda reta (y=2, escala k+1)
  \begin{scope}[yshift=6cm]
    \draw[thin] (-1,0)--(20.7,0); % Linha base fina em preto
    \node[above] at (-1,0.3) {$0$};
    \node[above] at (3,0.3) {$L_{k+1}$};
    \foreach \y in {0,...,5} {\draw[thick] (4*\y-1,0.3) -- (4*\y-1,-0.3);}
    \foreach\x in {-1,0,1,...,20} {\draw[thin] (\x,0.15)--(\x,-0.15);}
    \node[above] at (0.2,0.3) {$L_k$};
    \node[above] at (19,0.3) {$L_{k+2}$};
    % Intervalos "Good" com linha contínua grossa
    \draw[very thick] (-1,0) -- (3,0);
    \draw[very thick] (5,0) -- (7,0);
    \draw[very thick] (8,0) -- (9,0);
    \draw[very thick] (10,0) -- (12,0);
    \draw[very thick] (15,0) -- (16,0);
    \draw[very thick] (17,0) -- (20.7,0);
    % Intervalos "Bad" permanecem com a linha base fina
    \draw[thin, ->] (20,0) -- (20.7,0); % Seta fina no final
  \end{scope}

  % Terceira reta (y=0, escala k+2)
  \begin{scope}[yshift=3cm]
    \draw[thin] (-1,0)--(20.7,0); % Linha base fina em preto
    \node[below] at (1,-0.1) {Good};
    \node[below] at (5,-0.1) {Good};
    \node[below] at (9,-0.1) {Bad};
    \node[below] at (13,-0.1) {Bad};
    \node[below] at (17,-0.1) {Good};
    \node[above] at (-1,0.3) {$0$};
    \node[above] at (3,0.3) {$L_{k+1}$};
    \foreach \y in {0,...,5} {\draw[thick] (4*\y-1,0.3) -- (4*\y-1,-0.3);}
    \node[above] at (19,0.3) {$L_{k+2}$};
    % Intervalos "Good" com linha contínua grossa
    \draw[very thick] (-1,0) -- (7,0);
    \draw[very thick] (15,0) -- (20.7,0);
    % Intervalos "Bad" permanecem com a linha base fina
    \draw[thin, ->] (20,0) -- (20.7,0); % Seta fina no final
  \end{scope}

  % Quarta reta (escala k)
  \begin{scope}
    \draw[thin] (-1,0)--(20.7,0); % Linha base fina em preto
    \node[below] at (9,-0.1) {Good};
    \node[above] at (-1,0.3) {$0$};
    \draw[thick] (-1,0.3) -- (-1,-0.3);
    \draw[thick] (19,0.3) -- (19,-0.3);
    \node[above] at (19,0.3) {$L_{k+2}$};
    % Intervalo "Good" com linha contínua grossa
    \draw[very thick] (-1,0) -- (20.7,0);
    % Seta fina no final
    \draw[thin, ->] (20,0) -- (20.7,0);
  \end{scope}
\end{tikzpicture}
\caption{The configurations of good and bad intervals at scales $k$, $k+1$, and $k+2$. Good intervals are represented by thick solid lines, and bad intervals are represented by thin solid lines.}
\label{fig:GBintervals}
\end{figure}

Let $\rho$ be a random variable with distribution:
\begin{equation}
\label{e:stat_delay}
\mathbb{P}(\rho=k):=\dfrac{1}{\mathbb{E}(\xi)}\sum_{i=k+1}^{\infty}\mathbb{P}(\xi=i),\ \textrm{for every }k\in\mathbb{Z}_+,
\end{equation}
independent of everything else.
Roughly, $\rho$ is the delay that should be introduced to the renewal process $\Lambda \cap \mathbb{Z}_+$ in order to make it stationary.
In other words, modifying \eqref{e:def_renewal} to start with $x_0 = \rho$ instead of $x_0 = 0$, the one-sided renewal process $\Lambda_+ = \{x_0, x_1, x_2, \ldots\}$ is stationary.
Let us denote $\upsilon_\xi^{\rho,+}$ the law of that process.
This allows us to define
\begin{equation}
p_k:=\upsilon^{\rho,+}_\xi\big(\text{$I_0^k$ is bad}\big)=\upsilon^{\rho,+}_\xi\big(\text{$I_j^k$ is bad}\big),
\end{equation}
where the second equality follows from the stationarity of $\rho$.

Let us define, for every $i, j, k \in \mathbb{Z}_+$ and $p\in(0,1)$,
\begin{equation}
\label{eq:def_qk}
q_k(p; i,j):=\max\Bigg\{\underset{\substack{ \Lambda; \ I^k_i \textrm{ and }\\ I^k_{i+1}\textrm{are good}}}{\max}\mathbb{P}^\Lambda_p\big((C_{i,j}^k)^c\big), \underset{\substack{ \Lambda; \ I^k_i \textrm{ is} \\ \textrm{good}}}{\max} \mathbb{P}^\Lambda_p\big((D_{i, j}^k)^c\big) \Bigg\}.
\end{equation}
Translation invariance allows us to write, for every $k \in \mathbb{Z}_+$,
\begin{eqnarray}
\label{eq:qk_inv}
q_k(p)\vcentcolon=q_k(p;0,0) = q_k(p; i, j), \textrm{ for any }i, j \in \mathbb{Z}_+.
\end{eqnarray}

The next two lemmas, taken from \cite{Hilario2023} establish upper bounds for $p_k$ and $q_k(p)$.

\begin{lemma}[Lemma 3.1 in \cite{Hilario2023}]
\label{l:p_k}
For every $k\in\mathbb{Z}_+$ we have
\begin{eqnarray}
\label{e:ctrl_env} 
p_k\leq L_k^{-\varepsilon/2}.
\end{eqnarray}
\end{lemma}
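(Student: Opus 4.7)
My plan is to prove the bound by induction on $k$, using the moment assumption \eqref{e:xi_na_um_mais_epsilion}, the regenerative structure of the stationary renewal process $\Lambda_+$, and the fast growth prescribed by \eqref{Lk}. For the base case $k=0$, the interval $I_0^0=[0,L_0)$ contains no renewal point of $\Lambda_+$ precisely when the stationary delay $\rho$ defined in \eqref{e:stat_delay} satisfies $\rho \geq L_0$, and a direct calculation gives
\[
p_0 = \upsilon_\xi^{\rho,+}(\rho \geq L_0) = \frac{1}{\mathbb{E}[\xi]}\sum_{k \geq L_0}\mathbb{P}(\xi > k) \leq \frac{\mathbb{E}[\xi^{1+\varepsilon}]}{\mathbb{E}[\xi]\, L_0^{\varepsilon}},
\]
which is smaller than $L_0^{-\varepsilon/2}$ provided $L_0$ is chosen sufficiently large, as permitted by the remark following \eqref{e:def_gamma_mu}.

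For the induction step, assume $p_{k-1} \leq L_{k-1}^{-\varepsilon/2}$. By the classification rule, the interval $I^k_j$ is bad only if at least two among its $N := \lfloor L_{k-1}^{\gamma-1}\rfloor$ scale-$(k-1)$ sub-intervals are bad and \emph{non-consecutive}. A union bound over such pairs $(i_1, i_2)$ with $i_2 \geq i_1+2$ yields
\[
p_k \leq \sum_{\substack{i_1<i_2\\ i_2 \geq i_1+2}} \upsilon_\xi^{\rho,+}\bigl(\text{$I^{k-1}_{i_1}$ and $I^{k-1}_{i_2}$ are both bad}\bigr).
\]
To bound each term by a constant multiple of $p_{k-1}^2$, I would exploit the regenerative structure of $\Lambda_+$: badness at scale $k-1$ is a function only of the restriction of $\Lambda_+$ to the corresponding sub-interval, so one can condition on the location of the last renewal point in $[0, i_2 L_{k-1})$, apply the strong Markov property to restart the process there, and control the resulting overshoot at the left endpoint of $I^{k-1}_{i_2}$ using the stationary identity \eqref{e:stat_delay}. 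The buffer of at least one sub-interval between $i_1$ and $i_2$ ensures this regeneration point can be located outside $I^{k-1}_{i_1}$, so that the two badness events decouple up to a multiplicative constant. This yields
\[
p_k \leq C\, N^2\, p_{k-1}^2 \leq C\, L_{k-1}^{2(\gamma-1)-\varepsilon}
\]
with $C$ independent of $k$.

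Since $L_k \leq L_{k-1}^\gamma$, closing the induction reduces to verifying $2(\gamma-1)-\varepsilon \leq -\varepsilon\gamma/2$, with the factor $C$ absorbed by taking $L_0$ large enough. With $\gamma = 1+\varepsilon/(2(\varepsilon+4))$ from \eqref{e:def_gamma_mu}, this amounts to $\varepsilon(2-\gamma) \geq 4(\gamma-1)$, a direct algebraic check that yields a slack of order $\varepsilon$. The main obstacle is the decoupling step: proving that the joint probability of two non-consecutive bad sub-intervals factorizes, up to a universal constant, as the product of the marginals. Handling the residual delay at the regeneration point and ensuring that the constant $C$ does not accumulate across scales constitutes the technical heart of the argument.
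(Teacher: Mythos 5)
This lemma is not proved in the paper under review; it is imported verbatim as Lemma 3.1 of \cite{Hilario2023}, so there is no ``paper's own proof'' to match against. Your multi-scale induction, with the base case computed via the stationary delay $\rho$ and the recursion $L_k = L_{k-1}\lfloor L_{k-1}^{\gamma-1}\rfloor$, is the natural approach and, up to the decoupling step, follows the general template one would expect. The base case is fine, and the algebra you do to close the induction assuming $p_k\leq CN^2p_{k-1}^2$ is correct (the slack works out to $\varepsilon/4$).

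The gap is in the decoupling claim $\mathbb{P}\bigl(\text{$I^{k-1}_{i_1}$ and $I^{k-1}_{i_2}$ both bad}\bigr)\leq Cp_{k-1}^2$, which you rightly flag as the technical heart, but which is in fact \emph{false} in general for heavy-tailed $\xi$. The problem is that a single interarrival of length at least $(i_2-i_1+1)L_{k-1}$ can cover both sub-intervals, and an interval with no renewal point is automatically bad at every scale. When $\mathbb{P}(\xi>t)\asymp t^{-(1+\varepsilon)}$, the length-biased (stationary) probability that a given point lies inside an interarrival of length $\geq 3L_{k-1}$ is of order $L_{k-1}^{-\varepsilon}$, which is the same order as $p_{k-1}$ at the bottom scale, not $p_{k-1}^2$. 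Hence $\mathbb{P}(\text{both bad})$ can be $\asymp p_{k-1}$ and your claimed recursion $p_k\leq CN^2p_{k-1}^2$ cannot hold. A correct argument has to split on whether the intermediate buffer contains a renewal: if it does, one can attempt a strong-Markov decoupling (but then the restarted process is a pure renewal, not the stationary one, and $p_{k-1}$ must be replaced by a uniform bound over starting points); if it does not, one is in the long-interarrival regime, which must be bounded directly via the tail of $\xi$ and summed over the separation $d=i_2-i_1-1$. The fact that the $1+\varepsilon$ moment gives $\mathbb{P}(\xi>dL_{k-1})\lesssim (dL_{k-1})^{-(1+\varepsilon)}$, one power better than the delay tail, is what rescues the union bound. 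As written, your sketch conflates these two regimes under a single ``up to a constant'' factorization, and that is precisely where it would break.
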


\begin{lemma}[Lemma 3.2 in \cite{Hilario2023}]
\label{l:ctrl_cross}
There exist $c_3=c_3(L_0, \varepsilon)\in\mathbb{Z}_+$ and $p=p(L_0, \varepsilon, c_3)$ sufficiently close to 1 such that
\begin{equation}
q_k(p) \leq \exp(-L_k^{\beta}), \,\,\,\, \textrm{ for any }k\geq c_3,
\end{equation}
where the exponent is given by
\begin{equation}
\label{e:def_beta}
\beta = 1- \frac{\varepsilon}{8(\varepsilon+4)}.
\end{equation}
\end{lemma}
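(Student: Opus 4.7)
The plan is to prove the bound by induction on the scale $k$, following a multi-scale renormalization scheme. The base case is established at some initial scale $k = c_3$: a good interval at scale $c_3$ contains at least one renewal point of $\Lambda$, so the edges inside the corresponding rectangles have length at most $O(L_{c_3})$ and a direct calculation shows that $q_{c_3}(p)$ can be made smaller than $\exp(-L_{c_3}^\beta)$ by choosing $p$ close enough to $1$ (the constant $c_3$ depends only on $L_0$ and $\varepsilon$, as in the conditions cited after \eqref{Lk}).

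For the inductive step, assume $q_k(p) \leq \exp(-L_k^\beta)$ and consider a good scale-$(k+1)$ interval $I_i^{k+1}$. By the recursive definition, its $\lfloor L_k^{\gamma-1}\rfloor$ scale-$k$ subintervals contain no two non-consecutive bad ones: either all are good, or the bad ones form a single consecutive block surrounded by good columns. This structural property is the key combinatorial input: it allows one to reconstruct horizontal crossings at scale $k+1$ by concatenating horizontal crossings (events $C^k_{\cdot,\cdot}$) across good scale-$k$ columns, with vertical detours (events $D^k_{\cdot,\cdot}$) through two adjacent slabs whenever the at-most-one bad block is encountered.

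Tiling the rectangle $(I_i^{k+1}\cup I_{i+1}^{k+1}) \times [jH_{k+1},(j+1)H_{k+1})$ by scale-$k$ rectangles yields $O(L_k^{\gamma-1})$ horizontal columns and $M_k := H_{k+1}/H_k = 2\lceil\exp(L_k^{\mu})\rceil$ vertical slabs of height $H_k$. In each slab, a horizontal crossing of the scale-$(k+1)$ rectangle uses at most $O(L_k^{\gamma-1})$ constituent scale-$k$ crossings, so by the inductive hypothesis and a union bound the failure probability in a single slab is at most $O(L_k^{\gamma-1})\exp(-L_k^\beta)\leq \exp(-\tfrac{1}{2}L_k^\beta)$ for $k$ large enough. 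Conditional on $\Lambda$, the events across distinct slabs involve disjoint edges and are therefore independent under $\mathbb{P}^\Lambda_p$, so the probability that no slab yields a crossing is at most
\[
\exp\bigl(-\tfrac{1}{2}\, M_k\, L_k^\beta\bigr) \;=\; \exp\bigl(-\tfrac{1}{2}\,\lceil\exp(L_k^\mu)\rceil\, L_k^\beta\bigr),
\]
which is much smaller than $\exp(-L_{k+1}^\beta)=\exp(-L_k^{\gamma\beta})$, since the double-exponential gain from $M_k$ dominates any polynomial factor in $L_k$. The analogous bound for the vertical event $(D_{i,j}^{k+1})^c$ follows by the same scheme, using the doubled vertical extent in \eqref{defD} to produce the two independent horizontal strips needed to connect top and bottom boundaries.

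The main obstacle is precisely in closing the induction: because $L_{k+1}^\beta = L_k^{\gamma\beta}$ with $\gamma > 1$, each inductive step must \emph{improve} the exponent of the crossing bound, which would be impossible from the union bound alone. The improvement is made possible exclusively by the super-polynomial multiplicative gain coming from the $M_k \approx \exp(L_k^\mu)$ independent vertical slabs, and requires $\mu < \beta$ together with the compatibility $\gamma\beta < 1+\mu$, both guaranteed by the calibration \eqref{e:def_gamma_mu}. A secondary technical point is that one must simultaneously control the rarity of bad scale-$k$ columns within good scale-$(k+1)$ intervals; this is where Lemma \ref{l:p_k} is used, and it imposes the lower bound on $L_0$ (depending on $\varepsilon$) mentioned in the remark preceding \eqref{e:def_gamma_mu}.
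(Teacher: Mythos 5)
The paper does not actually prove this lemma: it is Lemma 3.2 of \cite{Hilario2023}, cited wholesale, so your proposal is an attempt to reconstruct the argument from that reference rather than to match something in the present paper. With that caveat, your sketch of the \emph{horizontal} recursion is in the right spirit: tile the target rectangle into strips of height $H_k$, observe that a good scale-$(k+1)$ interval can contain at most one consecutive block of bad scale-$k$ subintervals, build a crossing in a strip by gluing $C^k$ and $D^k$ events, and then exploit independence across strips to obtain a super-exponential gain from the factor $M_k \approx 2\exp(L_k^\mu)$. One bookkeeping error to fix there: since each $D^k$ detour occupies height $2H_k$, distinct strips of height $H_k$ do \emph{not} use disjoint edges, so the independence claim as written is false. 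You need to group the strips into $M_k/2$ disjoint pairs of height $2H_k$ and run the argument over these pairs; this costs only a factor of $2$ and does not affect the asymptotics.

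The genuine gap is in the \emph{vertical} step, where you write that the bound ``follows by the same scheme.'' It does not. For $C^{k+1}$ the crossing only needs to occur in \emph{some} strip, which is why the $M_k$ independent trials help; for $D^{k+1}$ the vertical crossing must traverse \emph{every} one of the $\sim M_k$ stacked height-$2H_k$ slabs, so the slab-independence trick gives no gain at all. The naive union bound yields $q_{k+1}^{\mathrm{v}} \lesssim M_k\,q_k \leq \exp\bigl(L_k^\mu - L_k^\beta\bigr)$, and since $L_{k+1}^\beta = L_k^{\gamma\beta}$ with $\gamma\beta > \beta$, the required inequality $L_k^\beta - L_k^\mu \geq L_k^{\gamma\beta}$ is simply false for large $k$. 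The phrase ``using the doubled vertical extent \ldots to produce the two independent horizontal strips'' does not rescue this: two overlapping half-boxes would need to be crossed jointly, not alternatively, and their intersection is not empty, so no multiplicative gain arises. The correct mechanism must produce independence in the \emph{other} direction — for instance, by observing that vertical crossings attempted in the $\sim L_k^{\gamma-1}$ disjoint good width-$L_k$ columns of $I_i^{k+1}$ are conditionally independent, each failing with probability at most $M_k q_k$ (this is exactly where $\mu < \beta$ enters, making $M_k q_k < 1$), whence $q_{k+1}^{\mathrm{v}} \lesssim (M_k q_k)^{c L_k^{\gamma-1}}$, and one then checks $\gamma - 1 + \beta \geq \gamma\beta$, equivalent to $\beta \leq 1$. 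Without some such column-independence (or a comparable extra gain), the induction for the vertical event cannot be closed, and this is the heart of what is nontrivial in the cited lemma.
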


\begin{remark} 
That the value of $\beta$ given in \eqref{e:def_beta} works, follows from our choices of $\gamma$ and $\mu$ in \eqref{e:def_gamma_mu}, together with \cite[Eq.\ (37), pp.\ 3156]{Hilario2023}.
\end{remark}

Let us write $\upsilon_{\xi}^{n,+}$ for the one-sided renewal with interarrival distribution $\xi$ starting at position $n \geq 0$.

As a direct consequence of Lemma \ref{l:p_k} we have
\begin{lemma}
Let $\xi$ satisfy \eqref{e:xi_na_um_mais_epsilion} and \eqref{e:xi_aperiodic}.
Then, for every integer $n\geq 0$, we have
\begin{equation}
\label{e:eventually_good_n}
\upsilon_{\xi}^{n,+} \big(\text{$I^k_{0}$ and $I^k_1$ are good eventually} \big) =1.
\end{equation}
\end{lemma}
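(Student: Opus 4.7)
The plan is to combine Lemma~\ref{l:p_k} with a Borel--Cantelli argument under the stationary measure and then transfer the conclusion to $\upsilon^{n,+}_\xi$ via a standard renewal coupling.

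First, I would observe that by Lemma~\ref{l:p_k}, $p_k\leq L_k^{-\varepsilon/2}$, and since the recursion \eqref{Lk} makes $L_k$ grow doubly-exponentially (so $\sum_k L_k^{-\varepsilon/2}<\infty$), translation invariance of $\upsilon^{\rho,+}_\xi$ together with Borel--Cantelli gives that, under the stationary measure, both $I^k_0$ and $I^k_1$ are good for all sufficiently large $k$, almost surely. I would then strengthen this via a union bound over the $\lfloor L_{k-1}^{\gamma-1}\rfloor$ scale-$(k-1)$ sub-intervals of $I^k_0$:
\[
\upsilon^{\rho,+}_\xi\bigl(\exists\,j\in\{1,\ldots,\lfloor L_{k-1}^{\gamma-1}\rfloor-1\}\colon I^{k-1}_j \text{ is bad}\bigr)\leq L_{k-1}^{\gamma-1-\varepsilon/2}.
\]
The choice of $\gamma$ in \eqref{e:def_gamma_mu} ensures $\gamma-1<\varepsilon/2$, so the right-hand side is summable in $k$; a second application of Borel--Cantelli then shows that, $\upsilon^{\rho,+}_\xi$-a.s., all the sub-intervals $I^{k-1}_j$ with $j\geq 1$ inside $I^k_0$ are eventually good.

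To pass from $\upsilon^{\rho,+}_\xi$ to $\upsilon^{n,+}_\xi$, I would invoke the classical Doeblin-type renewal coupling: by \eqref{e:xi_na_um_mais_epsilion} we have $\E[\xi]<\infty$, and by \eqref{e:xi_aperiodic} $\xi$ is aperiodic, so one can realize $\Lambda_n\sim\upsilon^{n,+}_\xi$ and $\Lambda_\rho\sim\upsilon^{\rho,+}_\xi$ on a common probability space with an a.s.\ finite coupling time $T$ satisfying $\Lambda_n\cap[T,\infty)=\Lambda_\rho\cap[T,\infty)$. For $k$ large enough that $L_{k-1}>T$, the interval $I^k_1=[L_k,2L_k)$ and each $I^{k-1}_j$ with $j\geq 1$ inside $I^k_0$ lie entirely in $[T,\infty)$, so their goodness at every scale is identical under $\Lambda_n$ and $\Lambda_\rho$. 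Combined with the strengthened stationary statement, this shows that under $\Lambda_n$ the set of bad scale-$(k-1)$ sub-intervals of $I^k_0$ is contained in $\{0\}$ for all large $k$, which is a (possibly empty) consecutive block; hence $I^k_0$ is good, and the same coupling yields goodness of $I^k_1$ under $\Lambda_n$.

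The main obstacle I anticipate is precisely that $I^k_0=[0,L_k)$ always intersects the pre-coupling region $[0,T)$, so its goodness cannot be compared to the stationary case by a direct application of the coupling; the resolution above is to use the strengthened Borel--Cantelli statement so that the single potentially-differing sub-interval $I^{k-1}_0$ may be absorbed into a trivial consecutive block of bad sub-intervals, regardless of its status under $\Lambda_n$.
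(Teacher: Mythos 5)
Your proof is correct, but it takes a genuinely different and substantially heavier route than the paper's. The paper observes that $\upsilon^{\rho,+}_\xi$ is by construction a mixture of the laws $\upsilon^{n,+}_\xi$ over the stationary delay $\rho$, so that $\upsilon^{n,+}_\xi(\cdot)=\upsilon^{\rho,+}_\xi(\cdot\mid\rho=n)$; once Borel--Cantelli gives the event probability $1$ under $\upsilon^{\rho,+}_\xi$, conditioning on the positive-probability event $\{\rho=n\}$ preserves probability $1$, and the proof is done in three lines. You instead prove a stronger union-bound version of the Borel--Cantelli step (that \emph{every} scale-$(k-1)$ sub-interval of $I^k_0$ with index $j\geq 1$ is eventually good), and then transfer via a Doeblin renewal coupling, absorbing the one potentially uncoupled sub-interval $I^{k-1}_0$ into a trivial consecutive block. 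Both arguments are valid; the paper's is much shorter. What your argument buys is robustness: the paper's conditioning step uses the assertion that $\mathbb{P}(\rho=n)>0$ for every $n\geq 0$, which by \eqref{e:stat_delay} requires $\mathbb{P}(\xi>n)>0$, i.e.\ unbounded support of $\xi$ --- true in the intended application where $\xi=|\mathcal{E}_{2k}|+1$, but not among the lemma's stated hypotheses \eqref{e:xi_na_um_mais_epsilion} and \eqref{e:xi_aperiodic}. Your coupling argument only needs aperiodicity and finite mean, so it establishes the lemma exactly as stated, at the cost of the extra strengthened estimate and the coupling machinery.
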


\begin{proof}
By \eqref{e:ctrl_env}, we have 
\[
\sum_{k=0}^{\infty} \upsilon^{\rho,+}_{\xi} \big(\text{$I^k_0$ or $I^k_1$ are bad}\big) \leq \sum_{k=0}^{\infty} L_k^{-\varepsilon/2} < \infty.
\]
Therefore,
\[
\upsilon^{\rho,+}_{\xi} \big(\text{$I^k_0$ and $I^k_1$ are good eventually}\big) =1.
\]
By \eqref{e:stat_delay}, $\rho$ assumes any positive integer value with positive probability.
Hence,
\[
\upsilon^{n,+}_{\xi} \big(\text{$I^k_0$ and $I^k_1$ are good eventually}\big) =\upsilon^{\rho,+}_{\xi} \big(\text{$I^k_0$ and $I^k_1$ are good eventually} \mid \rho = n\big) = 1.
\]
\end{proof}

\begin{lemma}

Let $\Lambda = \big\{\xi_k\big\}_{k\in \mathbb{Z}}$ where $\xi_k=|\mathcal{E}_{2k}|+1$.
Then, for almost every realization of $\Lambda$, we have
\begin{equation}
\text{$I^k_{-2}$, $I^k_{-1}$, $I^k_{0}$ and $I^k_1$ are good eventually}.
\end{equation}
\end{lemma}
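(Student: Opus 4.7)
The plan is to apply the preceding lemma once on each side of the origin and combine via a union bound.

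For the right-hand side, the restriction $\Lambda \cap \mathbb{Z}_+$ is the one-sided renewal starting at $x_0 = 0$ whose interarrivals $\xi_1, \xi_2, \ldots$ are i.i.d.\ copies of $\xi := |\mathcal{E}_2|+1$. By Lemma \ref{l:expectation_ek} together with the remarks preceding the present lemma, this $\xi$ satisfies \eqref{e:xi_na_um_mais_epsilion} and \eqref{e:xi_aperiodic}. Applying the preceding lemma with $n=0$ immediately yields that $I^k_0$ and $I^k_1$ are good eventually, almost surely.

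For the left-hand side, I would reflect: set $\tilde{\Lambda} := (-\Lambda) \cap \mathbb{Z}_+ = \{0,\,\xi_0,\,\xi_0+\xi_{-1},\,\xi_0+\xi_{-1}+\xi_{-2},\,\ldots\}$. This is a renewal starting at $0$ whose first gap is $\xi_0 = |\mathcal{E}_0|+1$ and whose subsequent gaps are i.i.d.\ copies of $\xi$. Conditioning on $\xi_0 = n$, the process $\tilde{\Lambda} \setminus \{0\}$ has law $\upsilon_\xi^{n,+}$, and the preceding lemma then gives that $I^k_0$ and $I^k_1$ are good eventually for $\tilde{\Lambda}$, almost surely. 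Up to a minor endpoint convention (the reflection of $I^k_{-1}=[-L_k,0)$ is $(0,L_k]$ rather than $[0,L_k)$, so the two descriptions differ by at most the isolated point $0 \in \tilde{\Lambda}$ at one end and by $L_k$ at the other, which does not affect the eventual good/bad classification at higher scales), this translates via reflection to $I^k_{-1}$ and $I^k_{-2}$ being good eventually for $\Lambda$. Since the conclusion holds almost surely conditional on each value $\xi_0 = n$, it holds unconditionally by summing over $n$.

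Finally, a union bound over the four intervals gives the stated result. The only subtle point is the anomalous distribution of $\xi_0$ compared to the other $\xi_k$; this is handled routinely by conditioning on $\xi_0$ and exploiting the fact that the preceding lemma holds for every starting position $n \geq 0$. The endpoint bookkeeping between $I^k_{-1},I^k_{-2}$ (for $\Lambda$) and $I^k_0, I^k_1$ (for $\tilde\Lambda$) is the main technical nuisance, but it amounts to adding or removing a single point from each boundary interval and is absorbed by the ``eventually'' quantifier.
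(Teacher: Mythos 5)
Your proposal is correct and follows essentially the same route as the paper: treat the two sides of the origin separately, apply the preceding lemma (with $n=0$ on the right, and conditionally on the anomalous first gap $\xi_0 = |\mathcal{E}_0|+1 = n$ on the left, summing over $n$), and combine. The paper factors the joint probability using independence rather than a union bound, and glosses over the reflection endpoint bookkeeping just as lightly as you do, so there is no substantive difference.
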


\begin{proof}
Let us write $\xi$ for the law of $\xi_k$ with $k\neq 0$.
We also write $\bar{\upsilon}_{\xi}^{0}$ %(respectively  $\bar{\upsilon}_{\xi}^{0,+}$) 
for the law of the two-sided %(respectively one-sided) 
renewal process similar to $\upsilon_{\xi}^{0}$, %(respectively $\upsilon_{\xi}^{0,+}$) 
with the exception that the distribution of $\xi_0$ is that of $|\mathcal{E}_0|+1$.
Recall that  $\upsilon_{\xi}^{n,+}$ denotes the one-sided renewal with interarrival distribution $\xi$ starting at position $n \geq 0$. We have
\begin{equation}
\label{e:good_eventually_one_sided}
\begin{split}
\bar{\upsilon}_{\xi}^{0} & \big(\text{$I^k_{-2}$, $I^k_{-1}$, $I^k_{0}$ and $I^k_1$ are good eventually} \big) \\ 
& = \bar{\upsilon}_{\xi}^{0} \big(\text{$I^k_{-2}$ and $I^k_{-1}$ are good eventually} \big) \cdot \bar{\upsilon}_{\xi}^{0} \big(\text{$I^k_{0}$ and $I^k_{1}$ are good eventually} \big) \\
&\quad \quad =  {\upsilon}_{\xi}^{|\mathcal{E}_0|+1,+} \big(\text{$I^k_{0}$ and $I^k_{1}$ are good eventually} \big) \cdot {\upsilon}_{\xi}^{0,+} \big(\text{$I^k_{0}$ and $I^k_{1}$ are good eventually} \big),
\end{split}
\end{equation}
where we have used independence of what occurs on both sides of the origin and  reflection invariance of the one-sided process.

By \eqref{e:eventually_good_n} with $n=0$, $\upsilon_{\xi}^{0,+} \big(\text{$I^k_{0}$ and $I^k_{1}$ are good eventually} \big) =1$, so it suffices to prove that 
\[
 {\upsilon}_{\xi}^{|\mathcal{E}_0|+1,+} \big(\text{$I^k_{0}$ and $I^k_{1}$ are good eventually} \big) =1.
\]

In fact we have,
\begin{equation}
\begin{split}
{\upsilon}_{\xi}^{|\mathcal{E}_0|+1,+} & \big(\text{$I^k_{0}$ and $I^k_{1}$ are good eventually} \big) \\ 
& = \sum_{n=0}^{\infty} {\upsilon}_{\xi}^{|\mathcal{E}_0|+1,+} \left(\text{$I^k_{0}$ and $I^k_{1}$ are good eventually} ~ \big\vert ~ |\mathcal{E}_0|+1 = n \right) \cdot \mathbb{P}(|\mathcal{E}_0|+1 = n) \\
& =  \sum_{n=0}^{\infty} {\upsilon}_{\xi}^{n,+} \big(\text{$I^k_{0}$ and $I^k_{1}$ are good eventually} \big) \cdot \mathbb{P}(|\mathcal{E}_0|+1 =n) = 1,
\end{split}
\end{equation}
where we have also used \eqref{e:eventually_good_n} at the last step.
\end{proof}

Let us now consider the events
\begin{equation}
\label{e:circuit_scale_k}
F_{k}=\Big(\bigcap_{i={-2}}^{0} C^{k}_{i,1}\Big) \cap \Big(\bigcap_{i={-2}}^{1} D^{k}_{i,0}\Big).
\end{equation}
See Figure \ref{f:circuit_scale_k}.

\begin{figure}[htb!]
\centering
\begin{tikzpicture}[scale=0.6, every node/.style={scale=0.8}]
  % Primeiro retângulo (cruzamento horizontal)
  \begin{scope}[yshift=3cm]
    % Retângulo
    \draw[thick] (0,3) -- (0,0) -- (4,0);
    \draw[dashed] (0,3) -- (4,3) -- (4,0);
    \draw[dashed] (2,0) -- (2,3);
    % Cruzamento horizontal
    \draw (0,.8) to [out=0,in=-160] (2.8,1.2) to [out=20,in=200] (4,2);
  \end{scope}
  \begin{scope}[yshift=3cm, xshift=2cm]
    % Retângulo
    \draw[thick] (0,3) -- (0,0) -- (4,0);
    \draw[dashed] (0,3) -- (4,3) -- (4,0);
    \draw[dashed] (2,0) -- (2,3);
   % Cruzamento horizontal
    \draw (0,.8) to [out=0,in=-160] (2.8,1.2) to [out=20,in=200] (4,2);
  \end{scope}
  \begin{scope}[yshift=3cm, xshift=4cm]
    % Retângulo
    \draw[thick] (0,3) -- (0,0) -- (4,0);
    \draw[dashed] (0,3) -- (4,3) -- (4,0);
    \draw[dashed] (2,0) -- (2,3);
    % Cruzamento horizontal
    \draw (0,.8) to [out=0,in=-160] (2.8,1.2) to [out=20,in=200] (4,2);
  \end{scope}

  % Segundo retângulo (cruzamento vertical), deslocado à direita
  \begin{scope}
    % Retângulo
    \draw[thick] (0,6) -- (0,0) -- (2,0);
    \draw[dashed] (0,6) -- (2,6) --  (2,0);
    \draw[dashed] (0,3) -- (2,3);
    % Nomes
    \node at (1,-0.4) {$I_{-2}^{k}$};
    \node at (-1,0) {$0$};
    \node at (-1.2,3) {$H_k$};
    \node at (-1.2,5.9) {$2H_k$};
    \node at (3,-0.4) {$I_{-1}^{k}$};
    % Cruzamento vertical
    \draw (0.6,0) to [out=120,in=-120] (1,3) to [out=60,in=290] (1.4,6);
  \end{scope}
  \begin{scope}[xshift=2cm]
    % Retângulo
    \draw[thick] (0,6) -- (0,0) -- (2,0);
    \draw[dashed] (0,6) -- (2,6) --  (2,0);
    \draw[dashed] (0,3) -- (2,3);
    % Cruzamento vertical
    \draw (1,3) to [out=60,in=290] (1.4,6);
    \draw[thin, dotted] (0.6,0) to [out=120,in=-120] (1,3);
  \end{scope}
  \begin{scope}[xshift=4cm]
    % Retângulo
    \draw[thick] (0,6) -- (0,0) -- (2,0);
    \draw[dashed] (0,6) -- (2,6) --  (2,0);
    \draw[dashed] (0,3) -- (2,3);
    % Nomes
    \node at (1,-0.4) {$I_{0}^{k}$};
    % Cruzamento vertical
    \draw (1,3) to [out=60,in=290] (1.4,6);
    \draw[thin, dotted] (0.6,0) to [out=120,in=-120] (1,3);
  \end{scope}\begin{scope}[xshift=6cm]
    % Retângulo
    \draw[thick] (0,6) -- (0,0) -- (2,0);
    \draw[dashed] (0,6) -- (2,6) --  (2,0);
    \draw[dashed] (0,3) -- (2,3);
    % Nomes
    \node at (1,-0.4) {$I_{1}^{k}$};
    % Cruzamento vertical
    \draw (0.6,0) to [out=120,in=-120] (1,3) to [out=60,in=290] (1.4,6);
  \end{scope}
  
\end{tikzpicture}
\caption{ Occurrence of the crossing events $C_{i, 1}^k$, $i=-2,-1,0$ together with $D_{i, 0}^k$ $i=-2,-1,0,1$ entails the existence of a dual circuit about the origin.}
\label{f:circuit_scale_k}
\end{figure}
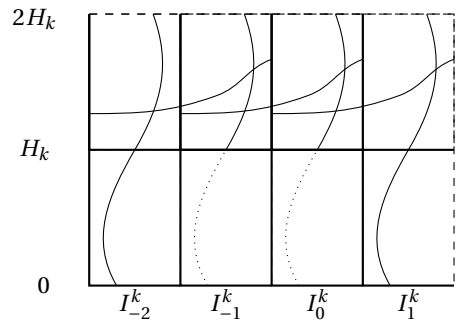

\begin{lemma}\label{l:Fk}
Let $\Lambda = \big\{|\mathcal{E}_{2k}|+1\big\}_{k\in \mathbb{Z}}$.
Then, for every $p$ sufficiently large,
\begin{equation}
\mathbb{P}^{\Lambda}_p (F_k \text{ eventually})=1,
\label{e:cross_eventually}
\end{equation}
for almost all realizations of $\Lambda$.
\end{lemma}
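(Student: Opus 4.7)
The plan is to combine the two lemmas immediately preceding the statement with a Borel--Cantelli argument under the quenched measure $\mathbb{P}^{\Lambda}_p$.

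First, I would observe that the event $F_k$ defined in \eqref{e:circuit_scale_k} only involves scale-$k$ intervals indexed by $\{-2,-1,0,1\}$: the horizontal crossings $C^k_{i,1}$ for $i=-2,-1,0$ span pairs $(I^k_i, I^k_{i+1})$ with both indices in that range, and each vertical crossing $D^k_{i,0}$ for $i\in\{-2,-1,0,1\}$ references a single such interval. By the lemma immediately preceding Lemma \ref{l:Fk}, for almost every realization of $\Lambda = \{|\mathcal{E}_{2k}|+1\}_{k\in\mathbb{Z}}$ there exists a random index $K = K(\Lambda)$ such that, for every $k\geq K$, the four intervals $I^k_{-2}, I^k_{-1}, I^k_0, I^k_1$ are simultaneously good.

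Fix such a realization of $\Lambda$ and take $k\geq \max\{K(\Lambda), c_3\}$, where $c_3$ is the constant from Lemma \ref{l:ctrl_cross}. For each of the three horizontal crossings $C^k_{i,1}$ with $i=-2,-1,0$, both $I^k_i$ and $I^k_{i+1}$ are good; for each of the four vertical crossings $D^k_{i,0}$ with $i\in\{-2,-1,0,1\}$, $I^k_i$ is good. Hence the definition of $q_k(p)$ in \eqref{eq:def_qk} bounds the quenched failure probability of each of these seven events by $q_k(p)$, and a union bound combined with Lemma \ref{l:ctrl_cross} (valid for $p$ sufficiently close to $1$) yields
\[
\mathbb{P}^{\Lambda}_p(F_k^c) \;\leq\; 7\, q_k(p) \;\leq\; 7 \exp\!\big(-L_k^{\beta}\big).
\]

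Finally, since the recursion \eqref{Lk} ensures $L_k$ grows roughly like $L_0^{\gamma^k}$ with $\gamma > 1$, the series $\sum_k \exp(-L_k^\beta)$ converges, so
\[
\sum_{k\geq \max\{K(\Lambda), c_3\}} \mathbb{P}^{\Lambda}_p(F_k^c) \;<\; \infty.
\]
The first Borel--Cantelli lemma then gives $\mathbb{P}^{\Lambda}_p(F_k^c \text{ i.o.}) = 0$, which is exactly \eqref{e:cross_eventually}. The only delicate point in this plan is keeping the two probabilistic levels cleanly separated: an environment-level almost-sure statement furnishes the random threshold $K(\Lambda)$ beyond which all seven quenched crossing estimates apply simultaneously, after which the Borel--Cantelli summability is immediate from the super-exponential growth of $L_k^{\beta}$.
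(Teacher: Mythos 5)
Your proposal is correct and follows exactly the paper's argument: invoke the preceding lemma to obtain, for almost every $\Lambda$, a random threshold beyond which $I^k_{-2},\dots,I^k_1$ are all good; then apply the union bound over the seven crossing events together with Lemma~\ref{l:ctrl_cross} to get $\mathbb{P}^{\Lambda}_p(F_k^c)\le 7\exp(-L_k^\beta)$; and finish with Borel--Cantelli using the superexponential growth of $L_k$.
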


\begin{proof}

Conditional on a realization of $\Lambda$ for which $I^k_{-2}$, $I^k_{-1}$, $I^k_{0}$ and $I^k_1$ are good eventually 
Lemma \ref{l:ctrl_cross} yields
\begin{equation}
\sum_{k\geq k_0} \mathbb P_{p}^{\Lambda} (F_k^c) \leq \sum_{k\geq k_0} 7 \exp(-L_k^\beta) < \infty.
\end{equation}
Hence, a Borel-Cantelli argument implies \eqref{e:cross_eventually}.

\end{proof}

\section{Proof of Theorem~\ref{theo1}}

We are now in a position to prove Theorem~\ref{theo1}. 
As argued in the last paragraph of Section~\ref{s:renormalization}, it suffices to show that for a sufficiently small infection rate $\lambda$, there almost surely exists a semi-circuit of good vertices enclosing the origin in the renormalized lattice $\mathbb{Z} \times \mathbb{Z}_+$. 
Lemma~\ref{l:Fk} enables us to study the existence of such circuits in a different bond percolation process.
We then apply Lemma~\ref{l:compare_site_bond} to compare the probabilities of crossing events, and thus of existence of semi-circuits, in both models. 
This lemma establishes that the probability of semi-circuits in the former percolation process is bounded below by the probability in the latter, with a certian parameter $\rho$ that can be made as large as we want by decreasing $\lambda$. 
As stated in Lemma~\ref{l:Fk}, this lower bound equals 1, provided that $\rho$ is sufficiently large.
The result follows.

\end{document}